\newtheorem{thm}{Theorem}[section]
\newtheorem{cor}[thm]{Corollary}
\newtheorem{prop}[thm]{Proposition}
\theoremstyle{definition}
\newtheorem{rmk}[thm]{Remark}
\newtheorem*{rmkMT}{Remark on Molien's Theorem}
\newtheorem*{rmkH1}{Remark on the rank of the fundamental group}
\theoremstyle{definition}
\newtheorem{ex}[thm]{Example}
\theoremstyle{remark}
\newtheorem{question}[thm]{Question}
\newtheoremstyle{named}{}{}{\itshape}{}{\bfseries}{.}{.5em}{\thmnote{#3's }#1}
\theoremstyle{named}
\newcommand{\srm}[1]{\stackrel{#1}{\maps}}
\newcommand{\e}{\emph}
\def\co{\colon\thinspace}
\newcommand{\R}{{\mathbb R}}
\newcommand{\C}{{\mathbb C}}
\newcommand{\Z}{{\mathbb Z}}
\newcommand{\T}{{\mathcal T}}
\newcommand{\X}{{\rm X}}
\newcommand{\Q}{{\mathbb Q}}
\newcommand{\F}{{\mathbb F}}
\newcommand{\Susp}{\Sigma}
\newcommand{\bbZ}{\mathbb{Z}}
\newcommand{\bbQ}{\mathbb{Q}}
\newcommand{\rk}{{\rm rank\,}}
\newcommand{\Tr}{{\rm trace\,}}
\newcommand{\Hom}{{\rm Hom}}
\newcommand{\Comm}{{\rm Comm}}
\newcommand{\leqs}{\leqslant}
\newcommand{\geqs}{\geqslant}
\newcommand{\heq}{\simeq}
\newcommand{\maps}{\longrightarrow}
\newcommand{\injects}{\hookrightarrow}
\newcommand{\isom}{\cong}
\newcommand{\cross}{\times}
\def\ds{\displaystyle}
\title[Poincar\'e series for spaces of commuting elements in Lie groups]{Hilbert--Poincar\'e series for spaces of commuting elements in Lie groups}
\author{Daniel A. Ramras}
\address{Indiana University - Purdue University Indianapolis, Indianapolis, IN 46202}
\email{dramras@iupui.edu}
\author{Mentor Stafa}
\address{Tulane University, New Orleans, LA 70118}
\email{mstafa@tulane.edu}
\date{\today}
\subjclass[2010]{Primary 22E99,  55N10; Secondary 20F55, 57T10}
\keywords{representation space, Hilbert--Poincar\'e series,
characteristic degree, finite reflection group}
\thanks{D. Ramras was partially supported by a Simons Collaboration grant (\#279007)}
\begin{document}

\begin{abstract}
In this article we study the homology of spaces $\Hom(\Z^n,G)$ of ordered pairwise commuting $n$-tuples in a Lie group $G$. We give an explicit formula for the Poincar\'e series of these spaces in terms of invariants of the Weyl group of $G$. By work of Bergeron and Silberman, our results also apply to $\Hom(F_n/\Gamma_n^m,G)$, where the subgroups $\Gamma_n^m$ are the terms in the descending central series of the free group $F_n$. Finally, we show that there is a stable equivalence between the space $\Comm(G)$ studied by Cohen--Stafa and its nilpotent analogues.
\end{abstract}

\maketitle

\tableofcontents

\section{Introduction}

Let $G$ be a compact and connected Lie group and let $\pi $
be a discrete group generated by $n$ elements.
In this article we study the rational  homology of the space of
group homomorphisms $\Hom(\pi,G)\subseteq G^n$, endowed with the
subspace topology from $G^n$. In particular, 
when $\pi$ is free abelian or nilpotent 
we give an explicit formula for
the Poincar\'e series of $\Hom(\pi,G)_1$, the connected component of the trivial representation,
in terms of invariants of the Weyl group $W$ of $G$.

The topology of the spaces $\Hom(\pi,G)$ has been studied extensively in recent years,
in particular when $\pi$ is a free abelian group
\cite{adem2007commuting,bairdcohomology,BJS,gomez.pettet.souto,pettet.souto,stafa.comm,stafa.comm.2};
in this case $\Hom(\bbZ^n, G)$ is known as
\textit{the space of ordered commuting $n$-tuples in $G$}.
The case in which $\pi$ is a finitely generated
nilpotent group was recently analyzed by Bergeron and Silberman~\cite{bergeron,bergeron2016note}.
These spaces and variations thereon, such as the space of
almost commuting elements \cite{borel2002almost},
have been studied in various settings,  
 including
work of  Witten and Kac--Smilga  on supersymmetric Yang-Mills theory 
\cite{witten1,witten2,kac.smilga}.

Our formula for the Poincar\'e series of the identity component $\Hom(\Z^n,G)_1$ builds on 
work of Baird~\cite{bairdcohomology} and Cohen--Reiner--Stafa~\cite{stafa.comm}.
In fact, we give a formula for a more refined \textit{Hilbert--Poincar\'e series}, 
which is a tri-graded version of the standard Poincar\'e series that arises from a certain cohomological description of these spaces due to Baird.
Work  
of Bergeron and Silberman \cite{bergeron2016note}
then leads immediately to
 results for nilpotent groups.  
 The formula we produce is obtained by comparing  stable splittings of  
 $\Hom(\Z^n,G)$ and of the space $\Comm (G)$ introduced in~\cite{stafa.comm}. 
The latter space is an analogue of the James reduced product construction for 
commuting elements in $G$; see Section~\ref{Comm-sec}.

\subsection{Main results} The main purpose of this paper is to give
an explicit formula for
the Poincar\'e series of the
 component $\Hom(\Z^n,G)_1$.
 
\begin{thm}\label{thm: Poincare series of Hom INTRO}
The  Poincar\'e series of $\Hom(\Z^n,G)_1$ is given by
$$
P(\Hom(\Z^n,G)_1;q)=\frac{\prod_{i=1}^r (1-q^{2d_i})}{|W|} 
\left(\sum_{w\in W} \frac{ \det(1+qw)^n}{\det(1-q^2w)} \right),
$$
where the integers $d_1,\dots,d_r$ are the characteristic degrees of the Weyl group $W$.
\end{thm}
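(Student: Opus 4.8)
The plan is to derive the formula directly from Baird's cohomological description of the identity component, combined with two classical facts about the Weyl group $W$ acting on the Lie algebra $\mathfrak{t}$ of a maximal torus $T\subseteq G$: Molien's formula and Chevalley's theorem on coinvariants. The starting point is Baird's isomorphism of graded $\Q$-vector spaces,
\[
H^*(\Hom(\Z^n,G)_1;\Q)\;\cong\;\bigl(H^*(G/T;\Q)\otimes H^*(T;\Q)^{\otimes n}\bigr)^{W},
\]
with $W$ acting diagonally. Since $\Q$ has characteristic $0$, the functor of $W$-invariants is exact, and for any graded $\Q[W]$-module $M=\bigoplus_k M_k$ of finite type the Poincar\'e series of $M^{W}$ equals $\tfrac{1}{|W|}\sum_{w\in W}\sum_k\Tr(w\mid M_k)\,q^k$. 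Applying this to Baird's isomorphism, and using that $H^*(T^n;\Q)=H^*(T;\Q)^{\otimes n}$ with the diagonal $W$-action, I get
\[
P(\Hom(\Z^n,G)_1;q)=\frac{1}{|W|}\sum_{w\in W}\Phi(w;q)\,\Psi(w;q)^{n},
\]
where $\Phi(w;q)$ and $\Psi(w;q)$ denote the graded traces of $w$ on $H^*(G/T;\Q)$ and on $H^*(T;\Q)$.

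Next I would evaluate the two graded traces. Let $V$ be the reflection representation of $W$; it is self-dual, so that, as $W$-modules, $V\cong H^1(T;\Q)$ (which sits in cohomological degree $1$) and $V\cong H^2(G/T;\Q)$ (which sits in degree $2$). From $H^*(T;\Q)=\Lambda^\bullet V$ we get $\Psi(w;q)=\sum_k q^k\Tr(w\mid\Lambda^kV)=\prod_\lambda(1+q\lambda)=\det(1+qw)$, the product taken over the eigenvalues $\lambda$ of $w$ on $V$. For the flag manifold, Borel's theorem identifies $H^*(G/T;\Q)$, as a graded $W$-module, with the coinvariant algebra $C_W=\mathrm{Sym}(V)/\bigl(\mathrm{Sym}(V)^{W}_{+}\bigr)$, with $V$ in cohomological degree $2$. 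By Chevalley's theorem there is an isomorphism of graded $W$-modules $\mathrm{Sym}(V)\cong\mathrm{Sym}(V)^{W}\otimes_\Q C_W$ with trivial action on the first factor, and $\mathrm{Sym}(V)^{W}$ is a polynomial ring on homogeneous generators of cohomological degrees $2d_1,\dots,2d_r$. Taking graded traces of $w$ on both sides of this isomorphism, the left-hand side equals $1/\det(1-q^2w)$ by Molien's formula, the first factor on the right equals $\prod_{i=1}^{r}1/(1-q^{2d_i})$ because the action there is trivial, and therefore
\[
\Phi(w;q)=\frac{\prod_{i=1}^{r}(1-q^{2d_i})}{\det(1-q^2w)}.
\]

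Substituting the two evaluations into the displayed sum, and pulling the $w$-independent factor $\prod_{i=1}^{r}(1-q^{2d_i})$ outside, produces exactly the asserted formula. The step I expect to require the most care is the bookkeeping of the $W$-actions: one must verify that the reflection representation entering $H^1(T;\Q)$ and the one entering $H^2(G/T;\Q)$, together with the (mutually inverse) twists on the two factors in Baird's map $G/T\times T^n\to\Hom(\Z^n,G)_1$, really combine so as to give the formula above rather than some dual or twisted variant. This turns out to be harmless precisely because the reflection representation of a finite Coxeter group is self-dual, so $w$ and $w^{-1}$ have the same trace on every graded component of $\Lambda^\bullet V$ and of $C_W$, and reindexing $w\mapsto w^{-1}$ leaves the sum over $W$ unchanged. (For the finer tri-graded Hilbert--Poincar\'e series announced in the introduction this purely invariant-theoretic route does not suffice, and one instead compares the stable splittings of $\Hom(\Z^n,G)$ and of $\Comm(G)$; that comparison is where the main work of the paper lies. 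But for the Poincar\'e series itself the argument above is enough.)
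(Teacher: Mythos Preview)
Your argument is correct. It is also genuinely different from the route taken in the paper.

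The paper does \emph{not} argue directly from Baird's isomorphism together with Molien/Chevalley. Instead, it first establishes (via the Cohen--Reiner--Stafa formula for $\Comm(G)_1$ and the stable splitting $\Sigma\,\Comm(G)_1\simeq\Sigma\bigvee_k\widehat{\Hom}(\Z^k,G)_1$) the reduced Hilbert--Poincar\'e series
\[
P(\widehat{\Hom}(\Z^k,G)_1;q)=A_W(q)\sum_{w\in W}\frac{(\det(1+qw)-1)^k}{\det(1-q^2w)},
\]
then invokes the companion splitting $\Sigma\,\Hom(\Z^n,G)_1\simeq\Sigma\bigvee_k\bigvee_{\binom{n}{k}}\widehat{\Hom}(\Z^k,G)_1$ to write $P(\Hom(\Z^n,G)_1;q)$ as a binomial sum of these pieces, and finally collapses the sum via $\sum_k\binom{n}{k}(\det(1+qw)-1)^k=\det(1+qw)^n$.

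Your approach bypasses the stable splittings and $\Comm(G)$ entirely: once one knows the graded $W$-character of $H^*(G/T)$ (Borel plus Chevalley's freeness theorem) and of $H^*(T^n)=(\Lambda^\bullet V)^{\otimes n}$, the formula drops out of the averaging projector $\frac{1}{|W|}\sum_w w$. This is shorter and more self-contained for the Poincar\'e series, and indeed also yields the bigraded $(q,s)$--refinement with no extra work. What the paper's detour buys is the identification of the individual stable summands $\widehat{\Hom}(\Z^k,G)_1$ and the tri-graded $(q,s,t)$--series tied to the tensor-length filtration of $J(T)$ inside $\Comm(G)$; those finer statements do not fall out of your invariant-theoretic computation. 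Your closing remark about self-duality (equivalently, reindexing $w\mapsto w^{-1}$) correctly disposes of the only subtlety, namely the mixed left/right $W$-action on $G/T\times T^n$.
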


A similar formula for the homology of the character variety $\Hom(\Z^n, G)/G$ appears in Stafa~\cite{Stafa-char-var}.
 
Some comments are in order regarding the above formula. 
Let $T \subset G$ be a maximal torus with lie algebra $\mathfrak{t}$.
Then the Weyl group $W$ acts on the dual space $\mathfrak{t}^*$  as a  finite reflection group, and the determinants in the formula are defined in terms of this linear representation of $W$.
The characteristic  degrees of $W$ arise by  considering the induced action of $W$  on the polynomial algebra $\R[x_1, \ldots, x_r]$, where the $x_i$ form a basis for  $\mathfrak{t}^*$ (so $r =\rk (G)$). It is a theorem of Shephard--Todd~\cite{shephard1954finite} and Chevalley~\cite{chevalley1955invariants} that the $W$--invariants  $\R[x_1, \ldots, x_r]^W$ form a polynomial ring with $r$ homogeneous generators. The characteristic degrees of $W$ are then the degrees of the homogeneous generators for $\R[x_1, \ldots, x_r]^W$. These degrees are well-known, and are displayed in Table~\ref{table: characteristic degrees}.
For further discussion of these ideas, see Section~\ref{FRG}.

 The spaces $\Hom(\pi,G)$ are not path-connected in general;  
 see for instance 
\cite{giese.sjerve} where the path components of $\Hom(\Z^n,SO(3))$ are described.
However, if there is only one conjugacy class of maximal abelian subgroups in $G$,
namely the conjugacy class of maximal tori,
then $\Hom(\Z^n,G)$ and $\Comm(G)$ are both path-connected.
 This is true, for instance, if $G=U(n)$, $SU(n)$, or $Sp(n)$;
on the other hand, $\Hom(\Z^n,SO(2n+1))$ is disconnected for $n\geqs 2$ and 
$\Hom(\Z^n,G_2)$ is disconnected for $n\geqs 3$.
In fact, Kac and Smilga have classified those compact, simple Lie groups for which $\Hom(\Z^n, G)$ is path connected~\cite{kac.smilga}.
Moreover, when $G$ is semisimple and simply connected, it is a theorem of Richardson that $\Hom(\Z^2, G)$ is an  
irreducible algebraic variety, and hence is connected \cite{richardson1979commuting}.

Theorem \ref{thm: Poincare series of Hom INTRO} can also be applied to nilpotent groups.
Let $F_n \unrhd \Gamma^2_n \unrhd \Gamma^3_n \cdots$ be the descending
central series of the free group $F_n$. Bergeron and Silberman
\cite{bergeron2016note} show that for each $m\geqs 2$, the identity component $\Hom(F_n/\Gamma^m_n,G)_1$ 
consists entirely of abelian representations.
In fact, they show that if $N$ is a finitely generated nilpotent group, then the natural map
$$\Hom(N/[N,N], G)\longrightarrow \Hom(N, G)$$
restricts to a homeomorphism between the identity components.
Since $G$ admits a neighborhood $U$ of the identity that contains no subgroup other than the trivial subgroup, every representation in the identity component of $\Hom(N/[N,N], G)$ kills the torsion subgroup of $N/[N,N]$.
Thus our main result also yields the homology of the identity component in 
$\Hom(N, G)$.

The assumption that $G$ is compact is not in fact very restrictive, since if $G$ is 
the complex points of a connected
reductive linear algebraic group over $\C$ (or the real points if $G$ is defined over $\R$) 
and $K\leqs G$ is a maximal compact subgroup, then
Pettet and Souto \cite{pettet.souto} showed that $\Hom(\Z^n,G)$ deformation
retracts onto $\Hom(\Z^n,K)$. For simplicity, we refer to such groups $G$ simply as \e{reductive} Lie groups. Bergeron \cite{bergeron} generalized this result
to finitely generated nilpotent groups (in fact, Bergeron's result also allows $G$ to be disconnected). 
It should be emphasized, however, 
that for other discrete groups $\pi$ it is known 
that the homotopy types of $\Hom(\pi, G)$ and $\Hom(\pi, K)$ can differ; 
examples appear in~\cite{adem2007commuting}.

The above descending central series can be used to define
a filtration of the James reduced product of $G$, denoted $J(G)$,
which is also known as  
the free monoid generated by the based space $G$. The filtration
is given by the spaces
$$
\Comm(G)=\X(2,G)\subset \X(3,G) \subset \cdots \subset \X(\infty,G)=J(G)
$$
defined in Section \ref{sec: topology Hom}, and   
was studied
by Cohen and Stafa \cite{stafa.comm}. Here we show that all the
terms in the filtration have the same  Poincar\'e series.

\begin{thm}\label{thm: Poincare series of X(q,G) Intro}   
The inclusion
$$P(\Comm (G)_1;q)\injects P(\X (m, G)_1;q)$$
induces an isomorphism in homology for every $m\geq 2$ .
\end{thm}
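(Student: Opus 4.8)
The plan is to reduce the statement, via a James-type stable splitting, to a comparison of the individual smash summands of $\X(2,G)_1=\Comm(G)_1$ and of $\X(m,G)_1$, and then to identify those summands using the theorem of Bergeron and Silberman. The first step is to recall (see Section~\ref{sec: topology Hom}, and compare the splitting of $\Comm(G)$ in~\cite{stafa.comm}) that the word-length filtration $J_0(G)\subseteq J_1(G)\subseteq\cdots$ of $J(G)$ restricts to each $\X(m,G)$, with successive subquotients the reduced representation spaces
$$
\widehat{\Hom}(F_n/\Gamma^m_n,G):=\Hom(F_n/\Gamma^m_n,G)\big/ S_n,
$$
where $S_n\subseteq\Hom(F_n/\Gamma^m_n,G)$ denotes the subspace of $n$-tuples with at least one trivial coordinate, and that after one suspension this filtration splits, compatibly with the inclusions $\X(m,G)\hookrightarrow\X(m',G)$ for $m\leqs m'$. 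I would then pass to the identity component $\X(m,G)_1$ --- a union of path components of $\X(m,G)$ --- and argue that the same analysis applies there, yielding
$$
\widetilde H_*\big(\X(m,G)_1\big)\;\cong\;\bigoplus_{n\geqs 1}\widetilde H_*\big(\widehat{\Hom}(F_n/\Gamma^m_n,G)_1\big),
$$
with $\widehat{\Hom}(F_n/\Gamma^m_n,G)_1:=\Hom(F_n/\Gamma^m_n,G)_1/\big(S_n\cap\Hom(F_n/\Gamma^m_n,G)_1\big)$, and so that under this isomorphism the inclusion $\Comm(G)_1\hookrightarrow\X(m,G)_1$ becomes the direct sum of the natural maps $\widehat{\Hom}(\Z^n,G)_1\to\widehat{\Hom}(F_n/\Gamma^m_n,G)_1$ induced by the abelianizations $F_n/\Gamma^m_n\surjects\Z^n$. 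It then suffices to show that each of these maps is an isomorphism on homology.

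For the second step, fix a finite $m\geqs 2$. Since $(F_n/\Gamma^m_n)^{ab}=F_n^{ab}=\Z^n$, the map $\Hom(\Z^n,G)\to\Hom(F_n/\Gamma^m_n,G)$ above is precisely precomposition with $F_n/\Gamma^m_n\surjects\Z^n$, and by the theorem of Bergeron and Silberman~\cite{bergeron2016note} it restricts to a homeomorphism on identity components, $\Hom(\Z^n,G)_1\xrightarrow{\;\cong\;}\Hom(F_n/\Gamma^m_n,G)_1$. On underlying $n$-tuples in $G^n$ this map is the identity, so it preserves the locus $S_n$ of tuples having a trivial coordinate and hence descends to a homeomorphism $\widehat{\Hom}(\Z^n,G)_1\xrightarrow{\;\cong\;}\widehat{\Hom}(F_n/\Gamma^m_n,G)_1$. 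Feeding this back into the splitting shows that $\Comm(G)_1\hookrightarrow\X(m,G)_1$ induces an isomorphism in homology for every finite $m\geqs 2$; combined with Theorem~\ref{thm: Poincare series of Hom INTRO}, all the spaces $\X(m,G)_1$ then have the same Poincar\'e series as $\Comm(G)_1$.

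The step I expect to be the main obstacle is the passage to identity components in the first part: one must verify that the path component of the identity in $\X(m,G)\subseteq J(G)$ is obtained by running the James-type construction on the identity components $\Hom(F_n/\Gamma^m_n,G)_1$, so that the suspension splitting of $\X(m,G)_1$ has wedge summands the reduced spaces $\widehat{\Hom}(F_n/\Gamma^m_n,G)_1$ and not all of $\widehat{\Hom}(F_n/\Gamma^m_n,G)$. One inclusion is immediate, since each $\Hom(F_n/\Gamma^m_n,G)_1$ is path-connected and contains the trivial representation. For the reverse inclusion, one cannot simply lift a path in $J(G)$ from a word $g_1\cdots g_k$ to the empty word, because the James quotient $\Hom(F_N/\Gamma^m_N,G)\to\X(m,G)\cap J_N(G)$ does not have the path-lifting property; instead I would use the continuous ``forget the last coordinates'' maps $\Hom(F_N/\Gamma^m_N,G)\to\Hom(F_k/\Gamma^m_k,G)$ --- well defined because a subgroup of a nilpotent group of class $<m$ is again nilpotent of class $<m$ --- together with the fact, again due to Bergeron--Silberman, that every $k$-tuple in $\Hom(F_k/\Gamma^m_k,G)_1$ is abelian, which in fact forces $\X(m,G)_1=\Comm(G)_1$ as subspaces of $J(G)$. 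This is also the point at which finiteness of $m$ is essential: for $m=\infty$ one has $\X(\infty,G)_1=J(G)$, with homology the tensor algebra on $\widetilde H_*(G)$, and correspondingly $\Hom(F_n,G)=G^n$ is path-connected, so the Bergeron--Silberman comparison fails.
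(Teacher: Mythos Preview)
Your approach is correct and follows the same route as the paper: reduce via the stable splitting (Proposition~\ref{prop: X(q,G) decomposition}) to the summands $\widehat{\Hom}(F_k/\Gamma^m_k,G)_1$, then identify these using Bergeron--Silberman. One remark: in the paper $\X(m,G)_1$ is \emph{defined} as the James-type construction on the identity components $\Hom(F_n/\Gamma^m_n,G)_1$ (see Section~\ref{Comm-sec}), so your ``main obstacle'' does not actually arise; your observation that, for compact $G$, this coincides with $\Comm(G)_1$ as a subspace of $J(G)$ is also recorded, just before the proof in Section~\ref{sec: Poincare series of X(q,G)}. The paper's proof additionally covers the reductive case, where Bergeron's retraction combined with Bergeron--Silberman yields only a homotopy equivalence $\Hom(\Z^n,G)_1\simeq\Hom(F_n/\Gamma^m_n,G)_1$ rather than a homeomorphism; there the comparison of the $S_n$-subspaces is handled via the Gluing Lemma rather than your direct observation that the map is the identity on underlying tuples in $G^n$.
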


\subsection{Structure of the paper}  
We start in Section \ref{sec: topology Hom} by giving
some basic topological properties of the spaces of 
homomorphisms $\Hom(\Z^n,G)$ and we define the spaces $\X (m, G) \subset J(G)$ considered above. In particular, we explain how all these spaces decompose into wedge sums 
after a single suspension. We prove Theorems 
\ref{thm: Poincare series of Hom INTRO} and 
\ref{thm: Poincare series of X(q,G) Intro} in Sections 
\ref{sec: Poincare series of Hom(Zn,G)} and 
\ref{sec: Poincare series of X(q,G)}, respectively. 
In Section~\ref{ungraded-sec}, we consider the ungraded cohomology and the rational complex $K$--theory of $\Hom(\Z^n, G)_1$.
Finally, we give examples of Hilbert--Poincar\'e series 
in Section \ref{sec: examples Poincare}, most
notably for the exceptional Lie group $G_2$.

\vspace{.2in}
\noindent {\bf Acknowledgements:} We thank Alejandro Adem and Fred Cohen for helpful comments, and Mark Ramras for pointing out the Binomial Theorem,
which simplified our formulas.

\section{Topology of commuting elements in Lie groups}\label{sec: topology Hom}

Let $G$ be a compact and connected Lie group. Fix a maximal torus $T\leqs G$ and let $W = N_G(T)/T$ be the Weyl group of
$G$. The map
\begin{equation}\label{c}G \times T \to G\end{equation}
 conjugating elements of the maximal
torus by elements of $G$ has been studied as far back as Weyl's work, and can be used to
show that the rational cohomology of $G$ is the ring of invariants
$[H^\ast(G/T) \otimes H^\ast(T)]^W$. 
To study the rational cohomology of $\Hom(\Z^n,G)$ we can proceed as follows.
The action by conjugation of $T$ on itself is trivial, so (\ref{c}) descends to a map
 $G/T\times T \to G$, which is invariant under the $W$--action $([g],t)\cdot [n] = ([gn],n^{-1}tn)$, where $n\in N_G (T)$. 
In~\cite{bairdcohomology} Baird showed that the induced map  
 \begin{align}\label{theta}
\begin{split}
\theta_n: G/T\times_{W} T^n &\to  \Hom(\Z^n,G)\\
[g,t_1,\dots,t_n] &\mapsto (gt_1g^{-1},\dots, gt_ng^{-1})
\end{split}
\end{align}
surjects 
onto  $\Hom(\Z^n,G)_1$ and
 induces an isomorphism of
rational cohomology groups
\begin{equation}\label{Baird}
H^\ast(\Hom(\Z^n,G)_1;\Q) \isom [H^\ast(G/T;\Q)\otimes H^\ast(T^n;\Q)]^W.
\end{equation}
This recovers the above fact about the cohomology of $G$ when $n=1$. 
Baird in  fact shows that all torsion in 
$H^\ast(\Hom(\Z^n,G)_1;\Z)$ has order
dividing $|W|$, but little else is known about the torsion in these spaces, 
beyond the case of $SU(2)$~\cite{BJS} and the fact that $H_1 (\Hom(\Z^n,G)_1; \Z)$ is torsion-free~\cite{gomez.pettet.souto}. 

We note that
as an ungraded $\Q W$-module, the ring $H^\ast(G/T;\Q)$ is simply
the regular representation $\Q W$, a well-known fact
that dates back to Borel \cite{borel1953cohomologie} --
a proof can be found for instance in the exposition by M. Reeder
\cite{reeder1995cohomology}. This fact implies that the
ungraded cohomology of the homomorphism space is just a
regraded version of the cohomology of $T^n$.
As we will see, various topological constructions related to the maps $\theta_n$ enjoy a similar structure in their cohomology.

Adem and Cohen \cite{adem2007commuting} showed that there is
a homotopy decomposition of the suspension of $\Hom(\Z^n,G)$
into a wedge sum of \textit{smaller} spaces as follows
\begin{equation}\label{eqn: stable decomp Hom}
\Sigma \Hom(\Z^n,G) \simeq \Sigma \bigvee_{1\leq k \leq n}
	 \bigvee_{n \choose k} \widehat{\Hom}(\Z^k,G),
\end{equation}
where $\widehat{\Hom}(\Z^k,G)$ is the quotient of $ \Hom(\Z^n,G)$ by the 
subspace consisting of all commuting $n$--tuples $(g_1, \ldots, g_n)$ 
such that $g_i=1$ for at least one coordinate $i$. 
This decomposition, along with the analogous decomposition of 
 $\Hom(\Z^n,G)_1$ given in Lemma~\ref{dec}, will play a key role in our study of homology.

Recall that the descending central series of a group $\pi$ is
the sequence of subgroups of $\pi$ given by
$
\pi=\Gamma^1 \unrhd \Gamma^2=[\pi,\pi] \unrhd
	\cdots \unrhd \Gamma^{k+1} \unrhd \cdots,
$
where inductively
$\Gamma^{k+1}=[\pi,\Gamma^{k}]$. Let $\Gamma^k_n$ be the $k$-th
stage in the descending central series of the free group $F_n$, and
note that $\Gamma_n^\infty = \bigcap_{k=1}^\infty \Gamma^k_n = 1$.
Then we obtain a filtration
\begin{equation}\label{eq: filtration of G^n}
\Hom(F_n/\Gamma^2_n,G)\subset \Hom(F_n/\Gamma^3_n,G)
		\subset \cdots \subset \Hom(F_n/\Gamma^\infty_n,G)=G^n
\end{equation}
of the space $G^n$ by subspaces of nilpotent $n$-tuples, where the first
term of the filtration is the space of commuting $n$-tuples
$F_n/\Gamma^2_n=F_n/[F_n,F_n]=\Z^n$.  It should be noted that
this filtration need not be exhaustive; that is, $\bigcup_{k=1}^\infty \Hom(F_n/\Gamma^k_n,G)$ is in general a proper subset of $G^n$.
Also note that
$T^n \subset F_n/\Gamma^2_n = \Hom(\Z^n,G)$, a fact that will be used later.
We obtain the following stable decompositions of the connected
components of the trivial representations for nilpotent $n$-tuples.

\begin{prop}\label{dec} Let $G$ be either a compact connected Lie group or a reductive connected Lie group.
For each  
 $m\geq 2$ there is a homotopy equivalence
\begin{align*}
\Sigma \Hom(F_n/\Gamma^m_n,G)_1 \simeq \Sigma
	\bigvee_{1\leq k \leq n} \bigvee_{n \choose k}
	\widehat{\Hom}(F_k/\Gamma^m_k,G)_1.
\end{align*}
In particular there is a homotopy equivalence
\begin{align*}
\Sigma \Hom(\Z^n,G)_1 \simeq \Sigma
	\bigvee_{1\leq k \leq n} \bigvee_{n \choose k}
	\widehat{\Hom}(\Z^k,G)_1.
\end{align*}

\end{prop}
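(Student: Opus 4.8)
The plan is to adapt the argument of Adem and Cohen \cite{adem2007commuting} that produces the stable decomposition \eqref{eqn: stable decomp Hom}, checking that it goes through for the nilpotent quotients $F_n/\Gamma^m_n$ and is compatible with passing to the identity component. The structural input is the following. For each $i$ let $s_i\colon G^n\to G^n$ be the map replacing the $i$-th coordinate by $1$. The endomorphism of $F_n$ killing the $i$-th generator and fixing the others preserves every term of the lower central series, hence descends to $F_n/\Gamma^m_n$, and $s_i$ is precomposition with this endomorphism; consequently $s_i$ restricts to a continuous retraction of $\Hom(F_n/\Gamma^m_n,G)$ onto $\{\,y:y_i=1\,\}$. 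The $s_i$ commute pairwise, and $s_i(\{y_j=1\})=\{y_i=1\}\cap\{y_j=1\}$ for all $i,j$. Moreover, for $S\subseteq\{1,\dots,n\}$ the coordinate inclusion $G^{|S|}\hookrightarrow G^n$ (coordinates in the slots indexed by $S$, $1$ elsewhere) identifies $\Hom(F_{|S|}/\Gamma^m_{|S|},G)$ homeomorphically with $\{\,y\in\Hom(F_n/\Gamma^m_n,G):y_j=1\text{ for }j\notin S\,\}$, since an $m$-fold commutator of the coordinates either involves a coordinate outside $S$, where it is automatically trivial, or only coordinates in $S$. This is exactly the data to which the general suspension-splitting mechanism behind \eqref{eqn: stable decomp Hom} applies (iterated use of the fact that a based cofibration admitting a retraction splits off a wedge summand after suspension), giving $\Sigma\Hom(F_n/\Gamma^m_n,G)\simeq\Sigma\bigvee_{\emptyset\neq S\subseteq\{1,\dots,n\}}\widehat{\Hom}(F_{|S|}/\Gamma^m_{|S|},G)$.

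To pass to identity components I would observe that $s_i$ and the coordinate inclusions fix the trivial representation and are continuous, hence preserve its path component, so the whole construction restricts to $\Hom(F_n/\Gamma^m_n,G)_1$. One must also check that the coordinate subspaces of the identity component are precisely the identity components of the smaller homomorphism spaces. This follows because $\{\,y:y_j=1\text{ for }j\notin S\,\}$ is a retract of $\Hom(F_n/\Gamma^m_n,G)$ (via the composite of the $s_j$ with $j\notin S$), so its intersection with the ambient identity component equals its own identity component, which under the homeomorphism above is $\Hom(F_{|S|}/\Gamma^m_{|S|},G)_1$. Granting this, the splitting restricts to the identity components and gives
\[
\Sigma\Hom(F_n/\Gamma^m_n,G)_1\ \simeq\ \Sigma\bigvee_{\emptyset\neq S\subseteq\{1,\dots,n\}}\widehat{\Hom}(F_{|S|}/\Gamma^m_{|S|},G)_1\ =\ \Sigma\bigvee_{1\le k\le n}\bigvee_{\binom{n}{k}}\widehat{\Hom}(F_k/\Gamma^m_k,G)_1,
\]
and the case $m=2$ yields the asserted decomposition for $\Z^n=F_n/\Gamma^2_n$.

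For the point-set hypotheses needed by the splitting mechanism, I would use that when $G$ is compact $\Hom(F_n/\Gamma^m_n,G)$ is a compact real algebraic set, hence a finite CW complex, which can be triangulated so that all the coordinate subspaces, their finite unions, and the identity component (which is open and closed) are subcomplexes; thus every inclusion involved is a cofibration. When $G$ is reductive the same argument applies, as $\Hom(F_n/\Gamma^m_n,G)$ is again a complex (or real) algebraic set and hence triangulable; alternatively one may reduce to the compact case via the deformation retraction of $\Hom(F_n/\Gamma^m_n,G)$ onto $\Hom(F_n/\Gamma^m_n,K)$ of Pettet--Souto \cite{pettet.souto} and Bergeron \cite{bergeron}, which is compatible with the maps $s_i$. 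The step I expect to require the most care is the combinatorial bookkeeping inside the splitting mechanism: one must verify, by an induction on $n$ parallel to that of \cite{adem2007commuting}, that peeling off the coordinates one at a time decomposes $\Sigma\Hom(F_n/\Gamma^m_n,G)_1$ into summands that are exactly the $\Sigma\widehat{\Hom}(F_k/\Gamma^m_k,G)_1$, with the subquotient attached to a subset $S$ depending only on $|S|$. Everything else is a short direct check or a citation.
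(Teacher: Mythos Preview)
Your argument is correct and complete in outline; the key structural observations (the coordinate retractions $s_i$ descend to $F_n/\Gamma^m_n$, they commute, they fix the trivial representation and hence preserve the identity component, and a retract of a connected space meets it in its own identity component) are exactly what is needed, and your handling of the point-set hypotheses via triangulability of real algebraic sets is adequate.

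The paper takes a somewhat different, more packaged route. Rather than adapting the original Adem--Cohen retraction argument by hand, it observes that $\{\Hom(F_n/\Gamma^m_n,G)\}_n$ is a simplicial space (with face maps deleting coordinates and degeneracies inserting the identity), invokes Villarreal \cite{villarreal2016cosimplicial} for the fact that this simplicial space is simplicially NDR in the sense of \cite{adem.cohen.gitler.bahri.bendersky}, notes that the face and degeneracy maps preserve identity components so that $\{\Hom(F_n/\Gamma^m_n,G)_1\}_n$ is again a simplicially NDR simplicial space, and then applies the general splitting theorem \cite[Theorem~1.6]{adem.cohen.gitler.bahri.bendersky} directly. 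The content is the same---your commuting retractions $s_i$ are precisely the degeneracy-face composites in the simplicial structure, and your cofibration checks play the role of the NDR condition---but the paper's version outsources both the combinatorial bookkeeping of the inductive splitting and the point-set verification to existing references, making the proof a three-line citation. Your approach has the virtue of being self-contained and making the mechanism visible; the paper's approach is shorter and avoids re-proving the inductive splitting lemma.
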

\begin{proof} This is a minor modification to the arguments 
in~\cite[Corollary 2.21]{villarreal2016cosimplicial}, where the corresponding decompositions 
for the full representation spaces are obtained. 
The spaces $\{\Hom(F_n/\Gamma^m_n,G)\}_n$ form a simplicial space, 
which Villarreal shows is simplicially NDR in the sense defined 
in~\cite{adem.cohen.gitler.bahri.bendersky}. The face and degeneracy maps in 
these simplicial spaces preserve the identity components, so 
$\{\Hom(F_n/\Gamma^m_n,G)_1\}_n$ is also a simplicial space, and is 
again simplicially NDR. The decompositions now follow from the 
main result of~\cite[Theorem 1.6]{adem.cohen.gitler.bahri.bendersky}.
\end{proof}

\subsection{The James reduced product}

The \textit{James reduced product}
$J(Y)$ can be defined for any CW-complex $Y$ with basepoint $*$.
In our discussion $Y$ is usually a compact Lie group with basepoint
the identity element. Define $J(Y)$ as the quotient space
$$
J(Y):= \bigg( \bigsqcup_{n\geq 0} Y^n \bigg)/\sim
$$
where $\sim$ is the relation
$(\dots,*,\dots) \sim (\dots,\widehat{*},\dots)$
omitting the coordinates equal to the basepoint.
This can also be seen as the free monoid generated by the
elements of $Y$ with the basepoint acting as the identity element.
It is a classical result  
that $J(Y)$ is weakly homotopy equivalent
to $\Omega\Sigma Y$, the loops on the suspension of $Y$.
Moreover, the suspension of $J(Y)$ is given by
$$
\Sigma J(Y) \simeq \Sigma \bigvee_{n \geq 1} \widehat{Y^n},
$$
where $\widehat{Y^n}$ is the $n$-fold smash product.
It was first observed by Bott and Samelson \cite{bott1953pontryagin}
that the homology of $J(Y)$ is isomorphic as an algebra
to the tensor algebra $\T[\widetilde{H}_\ast(Y;R)]$ generated
by the reduced homology of $Y$, given that the homology of $Y$
is a free $R$-module. This is a central result used in our calculation.

\subsection{The spaces $\X(m,G)$}\label{Comm-sec}

Now consider the case in which $Y = G$, a connected Lie group 
with basepoint the identity element $1\in G$.
A filtration of the  
free monoid $J(G)$ is given by
\begin{equation}\label{eq: filtration of J(G)}
\X(2,G)\subset \X(3,G) \subset
\X(4,G) \subset \cdots \subset \X(\infty,G)=J(G),  
\end{equation}
where each space is defined by
$$
\X(m,G):=\bigg(\bigsqcup_{n\geq 0} \Hom(F_n/\Gamma^m_n,G) \bigg)/\sim
$$
where $\sim$ is the same relation as in $J(G)$. 
The spaces $\X(m,G)$ and $\Comm(G)=\X(2,G)$
were studied in \cite{stafa.comm}, where
 it was shown that   
$\Comm(G)$ carries important information 
about the spaces of commuting $n$-tuples $\Hom(\Z^n,G)$.
However, note that in general the spaces
$\X(m,G)$ do not have the structure of a monoid for any $m$.
As in the case of spaces of homomorphisms, the spaces $\X(m,G)$
need not be path connected. For instance, the space $\X(2,SO(3))$
has infinitely many path components, as shown in \cite{stafa.thesis}.
We can define the connected
component of the trivial representation for each space
$\X(m,G)$ by
$$\X(m,G)_1:=\bigg(\bigsqcup_{n\geq 0}
		\Hom(F_n/\Gamma^m_n,G)_1 \bigg)/\sim $$ 
with $\X(2,G)_1=\Comm(G)_1.$
Cohen and Stafa \cite[Theorem 5.2]{stafa.comm} show that
there is a stable decomposition of this space as follows:   
\begin{equation}\label{eqn: stable decomp Comm}
\Sigma \X(m,G) \simeq \Sigma
	\bigvee_{k \geq 1} \widehat{\Hom}(F_k/\Gamma^m_k,G).
\end{equation}

We have an analogous result for the identity components.

\begin{prop}\label{prop: X(q,G) decomposition}
Let $G$ be either a compact connected Lie group or a reductive connected Lie group.
For each 
$m\geq 2$ there is a homotopy equivalence
$$
\Sigma \X(m,G)_1 \simeq \Sigma
	\bigvee_{k \geq 1} \widehat{\Hom}(F_k/\Gamma^m_k,G)_1.$$
This is true in particular for $\Comm(G)_1$.
\end{prop}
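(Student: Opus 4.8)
The plan is to run the proof of Proposition~\ref{dec} with infinite wedges in place of finite ones. The essential input, already recorded in the proof of Proposition~\ref{dec}, is that $\{\Hom(F_n/\Gamma^m_n,G)_1\}_n$ is a simplicial subspace of the simplicial space $\{\Hom(F_n/\Gamma^m_n,G)\}_n$ studied by Villarreal~\cite{villarreal2016cosimplicial}: the face maps delete a coordinate and the degeneracies repeat one, and since each of these maps sends the trivial tuple to the trivial tuple, they restrict to the components of the trivial representation. Moreover the simplicial NDR structure in the sense of~\cite{adem.cohen.gitler.bahri.bendersky} restricts as well, so $\{\Hom(F_n/\Gamma^m_n,G)_1\}_n$ is again simplicially NDR. (When $G$ is only reductive, one first passes to a maximal compact subgroup via the deformation retraction of Pettet--Souto and Bergeron~\cite{bergeron}, exactly as in the reductive case of Proposition~\ref{dec}.)

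Next I would observe that, straight from its definition, $\X(m,G)_1 = \big(\bigsqcup_{n\geq 0}\Hom(F_n/\Gamma^m_n,G)_1\big)/\!\sim$ is built from the simplicial space $\{\Hom(F_n/\Gamma^m_n,G)_1\}_n$ by the very same ``free monoid'' construction --- collapsing basepoint coordinates --- that produces $\X(m,G)$ from $\{\Hom(F_n/\Gamma^m_n,G)\}_n$. One may then apply the stable splitting theorem of~\cite{adem.cohen.gitler.bahri.bendersky}, i.e.\ the same machinery Cohen and Stafa use in~\cite[Theorem~5.2]{stafa.comm} to establish~(\ref{eqn: stable decomp Comm}), now to the simplicial space $\{\Hom(F_k/\Gamma^m_k,G)_1\}_k$. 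This yields
\[
\Sigma\, \X(m,G)_1 \;\simeq\; \Sigma\bigvee_{k\geq 1}\widehat{\Hom}(F_k/\Gamma^m_k,G)_1 ,
\]
because the $k$-th wedge summand produced by the splitting is the quotient of $\Hom(F_k/\Gamma^m_k,G)_1$ by the subspace of those nilpotent $k$-tuples having at least one trivial coordinate, which is precisely $\widehat{\Hom}(F_k/\Gamma^m_k,G)_1$. Setting $m=2$ gives the statement for $\Comm(G)_1$.

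The only thing requiring attention --- bookkeeping rather than a genuine obstacle --- is to confirm that passing to identity components is compatible with the simplicial operations and with the NDR data, so that the cited machinery applies verbatim; this is the direct analogue of the observation in Proposition~\ref{dec} that the face and degeneracy maps of $\{\Hom(F_n/\Gamma^m_n,G)\}_n$ preserve identity components. One should also check that the subspace of $\Hom(F_k/\Gamma^m_k,G)_1$ consisting of tuples with a trivial coordinate is the intersection with $\Hom(F_k/\Gamma^m_k,G)_1$ of the corresponding subspace of $\Hom(F_k/\Gamma^m_k,G)$, so that the wedge summand produced by the splitting is genuinely the space $\widehat{\Hom}(F_k/\Gamma^m_k,G)_1$ appearing in the statement.
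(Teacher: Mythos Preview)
Your proposal is correct and follows essentially the same approach as the paper: the paper's proof consists of the single sentence ``This follows from the proof of \cite[Theorem~5.2]{stafa.comm},'' and your write-up simply unpacks what that entails---namely, that the identity components form a simplicially NDR simplicial subspace to which the Cohen--Stafa argument applies verbatim. Your additional bookkeeping remarks (compatibility of the simplicial and NDR structures with passage to identity components, and identification of the wedge summands) are exactly the checks one would make in carrying this out.
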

\begin{proof}
This follows from the proof of \cite[Theorem 5.2]{stafa.comm}.
\end{proof}

\section{{Poincar\'e series of $\Hom(\Z^n,G)_1$}}\label{sec: Poincare series of Hom(Zn,G)}

For a topological space $X$ the (rational) \textit{Poincar\'e series} is
the series
$$
P(X; q):=\sum_{k\geq 0} \rk_\Q (H_k(X;\Q)) q^k.
$$
In this section we describe the Poincar\'e series of $\Hom(\Z^n,G)_1$.
Following \cite{stafa.comm}, we will refine the usual grading of cohomology 
and introduce  tri-graded \textit{Hilbert--Poincar\'e series}  for  
$X = \Hom(\Z^n,G)$, $\Comm(G)$, or $\X(m,G)$. 
These additional gradings will facilitate the computation 
of the Poincar\'e series itself.

For the remainder of this section, we will drop the coefficient 
group $\Q$ from our notation for (co)homology. The statements are true for any 
field with characteristic 0 or relatively prime to $|W|$.

The maps $\theta_{n}: G/T\times_{W} T^n \to \Hom(\Z^n,G)$ can be
assembled to give a map
\begin{equation}\label{Theta}
\Theta :  G/T\times_{W} J(T) \to \Comm(G)
\end{equation}
which surjects onto the connected component $\Comm(G)_1.$
It was shown in \cite{stafa.comm} that $\Theta$ induces
isomorphisms on the level of rational (co)homology, so   
rationally we obtain
\begin{equation}\label{Comm}
H^\ast(\Comm(G)_1) \cong [H^\ast(G/T)\otimes H^\ast(J(T))]^W
\cong  [H^\ast(G/T)\otimes \T^*[\widetilde{H}_*(T)] ]^W,
\end{equation}
where $\T^*$ denotes the dual of the tensor algebra.
This interpretation of the cohomology in terms of Weyl group invariants
allows us to make the following definition.
Define the Hilbert--Poincar\'e series of $\Comm(G)_1$ as the
tri-graded series
$$P(\Comm(G)_1;q,s,t)=\sum_{i,j,k \geq 0} \rk A(i,j,k)^W\,\, q^i s^j t^m,$$
where 
$$A(i,j,k) := H^i (G/T) \otimes \T^*[\widetilde{H}_*(T)]_{j,m}$$
and 
$\T^*[\widetilde{H}_*(T)]_{j,m}$
is the dual of the     
submodule of $\T[\widetilde{H}_*(T)]$   
generated by the $m$--fold tensors of total cohomological degree $j$.

To recover the ordinary Poincar\'e series
we can set $s$ equal to $q$ and $t$ equal to 1 since the tensor degree does not affect the (co)homological degree.
In order to understand this tri-graded version of the Poincar\'e series, we take a
short diversion to discuss the characteristic degrees of a finite reflection group.

\subsection{Finite reflection groups}\label{FRG}

A finite reflection group is a finite subgroup $W\subset GL_k(\mathbf{k})$, with  $\mathbf{k}$ a field of characteristic 0,
such that $W$ is  generated
by reflections. Equivalently, consider an $n$-dimensional vector
space $V$ over $\mathbf{k}$ equipped with the action of a
finite subgroup $W \subset GL (V)$. There is a corresponding
action on the symmetric algebra $R$ of $V$, which is isomorphic to the
polynomial algebra $R:=\mathbf{k}[x_1,\dots,x_n]$\footnote{Some authors (e.g. Grove and Benson~\cite[Chapter 7]{Grove-Benson}, or Humphreys~\cite[Chapter 3]{humphreys1992reflection}) replace $V$ by its dual $V^*$ in this discussion.}.
It is a classical result of Chevalley \cite{chevalley1955invariants} and
Shephard--Todd \cite{shephard1954finite} that when $W$ is generated by reflections, the invariant elements
of the $W$-action also form an algebra generated by $n$ elements,
and these generators can be chosen to be (algebraically independent) homogeneous
polynomials $f_1,\dots,f_n$. Hence the $W$-invariant subalgebra is given by
$R^W=\mathbf{k}[f_1,\dots,f_n]$. The degrees of the $f_i$
are independent of the choice of the homogeneous generators.
The degrees $d_i={\rm deg}(f_i)$ are called the \textit{characteristic degrees} of
the reflection group $W$. See \cite{springer1974regular,broue.reflextion.gps}
for a thorough exposition.

Let $W$ be the Weyl group of a compact and connected Lie group $G$,
which is finite. Then $W$ is a unitary reflection group: 
 $W$ acts on the maximal torus $T$ of $G$, and there is an induced action of $W$
on the Cartan subalgebra $\mathfrak{t}$ of the Lie algebra $\mathfrak{g}$.
The actions of $W$ on $\mathfrak{t}$ and its dual $\mathfrak{t}^\ast$ are   
faithful, so $W$ can be considered as a subgroup of $GL(\mathfrak{t}^\ast)$. Moreover,
$W$ is generated by reflections.
This action of $W$ has associated characteristic degrees $d_1,\dots,d_r$,
where $r$ is the rank of the maximal torus $T$, and it is a well-known fact
that $|W|=\prod_i d_i$. Characteristic degrees of reflection groups have many other remarkable properties,
outside the scope of this paper.

\begin{table}[ht!]
\centering
\caption{Characteristic degrees of Weyl groups $W$}
\label{table: characteristic degrees}
\begin{tabular}{llllllllllll}
Type & Lie group  & Rank & $W$ & $|W|$ & Characteristic degrees \\
\hline
$A_n$ & $SU(n+1)$  & $n\geq1$ & $\Sigma_{n+1}$ &$(n+1)!$& $2,3,...,n+1$\\
$B_n$ & $SO(2n+1)$ & $n$ & $\Z_2^n\rtimes\Sigma_n$& $n!2^n$& $2,4,\dots,2n$ \\
$C_n$ & $Sp(n)$    & $n$ & $\Z_2^n\rtimes\Sigma_n$& $n!2^n$& $2,4,\dots,2n$ \\
$D_n$ & $SO(2n)$   & $n$ & $H_n \rtimes\Sigma_n$& $n!2^{n-1}$ &$2,4,\dots,2n-2,n$ \\
$G_2$ & $G_2$      & 2   & $D_{2^2\cdot 3}$& 12 & $2,6$ \\
$F_4$ & $F_4$      & 4   & $D_{2^7\cdot 3^2}$& 1,152 & $2,6,8,12$ \\
$E_6$ & $E_6$      & 6   & $O(6,\F_2)$& 51,840 & $2,5,6,8,9,12$\\
$E_7$ & $E_7$      & 7   & $O(7,\F_2)\times \Z_2$& 2,903,040 & $2,6,8,10,12,14,18$ \\
$E_8$ & $E_8$      & 8   & $\widehat{O(8, \F_2)}$& $2^{14}3^5 5^2 7$ & $2,8,12,14,18,20,24,30$
\end{tabular}
\end{table}

As an example consider the unitary group $U(n)$ with Weyl group
the symmetric group $\Sigma_n$ on $n$ letters.   
The rank of $U(n)$ is $n$ and the $\Sigma_n$ acts on the maximal torus
$T=(S^1)^n$ by permuting the coordinates, so   
it acts on $\mathfrak{t}$
by permuting the basis vectors. 
Therefore, as a subgroup of
$GL(\mathfrak{t}^\ast)$ the Weyl group $\Sigma_n$ consists of permutation matrices.
 The invariant subalgebra is then generated by the
elementary symmetric polynomials $\epsilon_1,\dots,\epsilon_n$,   
with degrees $d_i={\rm deg}(\epsilon_i)=i$
for   
$i=1,\dots,n.$

Table \ref{table: characteristic degrees} summarizes
the Weyl groups and their associated characteristic degrees for families of
simple Lie groups, including exceptional Lie groups.
In the column for $W$, the group $H_n$ is the kernel of the multiplication map 
$\Z_2^n = \{\pm 1\}\to \Z_2$ (so $H_n$ consists of $n$--tuples containing an 
even number of $-1$'s),  $D_n$ denotes the dihedral group of order $n$,
and $\widehat{O(8, \F_2)}$ is a double cover of ${O(8, \F_2)}$.
Similar information about characteristic degrees can also be found in
\cite[p. 175]{springer1974regular} and
\cite[p. 59]{humphreys1992reflection}.

As shown in \cite{stafa.comm}, the information in   
Table \ref{table: characteristic degrees} 
and the realization of the Weyl group $W$ as a
subgroup of $GL(\mathfrak{t}^\ast)$ 
 suffice to describe the
rational cohomology of $\Comm(G)_1.$ This information will   
be
used below to describe   
the corresponding Hilbert--Poincar\'e series for $\Hom(\Z^k,G)_1$.

\subsection{Hilbert--Poincar\'e series}
Suppose $G$ has rank $r$. 
Let us denote by $A_W (q)$ 
the quantity
$$
A_W (q):=\frac{\prod_{i=1}^r (1-q^{2d_i})}{|W|},
$$
where $d_1,\dots,d_r$ are the characteristic degrees of $W$.
It was shown by Cohen, Reiner and Stafa \cite{stafa.comm} that the 
Hilbert--Poincar\'e series of
$\Comm(G)_1$ is given by the following infinite series.

\begin{thm}
Let $G$ be a compact and connected Lie group with maximal torus $T$
and Weyl group $W$. Then the Hilbert--Poincar\'e series of
the connected component of the trivial representation in $\Comm(G)$   
is given by
\begin{equation}\label{eqn: Poincere series Comm 3}
\ds P(Comm(G)_1;q,s,t) =
	A_W (q) \sum_{w\in W}
	\frac{1}{\det(1-q^2w)(1-t(\det(1+sw)-1))}.
\end{equation}
\end{thm}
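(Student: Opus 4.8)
The plan is to read off the tri-graded series directly from the cohomological identification (\ref{Comm}), by means of a multigraded form of Molien's theorem. In each tri-degree $(i,j,k)$ the relevant $\Q W$-module is $A(i,j,k)\cong H^i(G/T)\otimes \T^\ast[\widetilde H_\ast(T)]_{j,k}$, with $W$ acting diagonally. Since $|W|$ is invertible in the coefficient field, $\rk V^W=\frac{1}{|W|}\sum_{w\in W}\Tr(w\mid V)$ for every finite-dimensional $\Q W$-module $V$; applying this in each tri-degree and using that traces are multiplicative across tensor products, the Hilbert--Poincar\'e series factors as
$$P(\Comm(G)_1;q,s,t)=\frac{1}{|W|}\sum_{w\in W}\Big(\sum_i \Tr(w\mid H^i(G/T))\,q^i\Big)\Big(\sum_{j,k}\Tr(w\mid \T^\ast[\widetilde H_\ast(T)]_{j,k})\,s^j t^k\Big).$$
Because averaging over $W$ is unchanged by replacing $w$ with $w^{-1}$, the dual $\T^\ast$ may be replaced by $\T$ throughout, and likewise the precise choice among $\mathfrak t$, $\mathfrak t^\ast$ and the (co)character spaces is immaterial, so I will suppress it.

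Next I would evaluate the two equivariant generating functions. For the first, $H^\ast(G/T)$ is the coinvariant algebra $R/(R^W_+R)$ of the reflection representation (Borel), with cohomological degree twice the polynomial degree. Chevalley's theorem provides a graded $W$-module isomorphism $R\cong R^W\otimes (R/R^W_+R)$; combining the identity $\sum_i \Tr(w\mid R_i)\,q^i=\det(1-qw)^{-1}$ for the polynomial ring $R$ with the fact that $R^W$ is a polynomial ring on generators of degrees $d_1,\dots,d_r$, and then substituting $q\mapsto q^2$ to pass to cohomological degree, gives
$$\sum_i \Tr(w\mid H^i(G/T))\,q^i=\frac{\prod_{i=1}^r(1-q^{2d_i})}{\det(1-q^2 w)}.$$
For the second, $\widetilde H_\ast(T)\cong\bigoplus_{j=1}^r \Lambda^j\mathfrak t$ as graded $W$-modules, so its $s$-weighted equivariant Poincar\'e series is $\sum_{j\geq 1}\Tr(w\mid\Lambda^j\mathfrak t)\,s^j=\det(1+sw)-1$; since the length-$k$ summand of a tensor algebra has, under the diagonal action, graded character the $k$-th power of the graded character of the generating module, summing the geometric series in $t$ yields
$$\sum_{j,k}\Tr(w\mid \T[\widetilde H_\ast(T)]_{j,k})\,s^j t^k=\frac{1}{1-t\,(\det(1+sw)-1)},$$
which is a legitimate formal power series since $\det(1+sw)-1$ has vanishing constant term. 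Substituting both expressions into the averaged formula above and collecting the prefactor $\prod_{i=1}^r(1-q^{2d_i})/|W|=A_W(q)$ then produces exactly the asserted identity.

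I expect the only real content to lie in the equivariant Poincar\'e series of $H^\ast(G/T)$: it rests on Chevalley's theorem---equivalently, on the freeness of the polynomial ring over its ring of $W$-invariants---together with Borel's identification of $H^\ast(G/T)$ with the coinvariant algebra, and it requires some care with the doubling of degrees and with duals. Everything else, namely the trace computations on symmetric, exterior and tensor powers, the geometric-series manipulation, and the averaging step, is routine once that formula is in hand.
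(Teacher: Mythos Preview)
Your proof is correct. Note, however, that the paper does not itself prove this theorem: it is quoted from Cohen--Reiner--Stafa \cite{stafa.comm} and used as input for the subsequent computations. Your argument---Molien averaging applied to the description (\ref{Comm}), combined with Chevalley's freeness $R\cong R^W\otimes R/(R^W_+R)$ to obtain the equivariant Hilbert series of the coinvariant algebra $H^\ast(G/T)$, and the standard identities $\sum_j \Tr(w\mid\Lambda^j V)s^j=\det(1+sw)$ and $\sum_i\Tr(w\mid \mathrm{Sym}^i V)q^i=\det(1-qw)^{-1}$---is the natural direct route and is almost certainly the approach of the cited paper; the present paper's later ``Remark on Molien's Theorem'' spells out exactly the $m=0$ instance of the same computation. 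One minor simplification: since the $W$-modules in play are defined over $\Q$ (indeed over $\R$), their characters are real, so $\Tr(w\mid V^\ast)=\Tr(w\mid V)$ directly and the passage from $\T^\ast$ to $\T$ does not actually require the reindexing $w\mapsto w^{-1}$.
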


Using this theorem and stable decompositions of $\Comm(G)_1$ given
above, we will   
now describe the Hilbert--Poincar\'e polynomial of the
space of ordered pairwise commuting $n$-tuples.
We begin with the following result, which  
is the fundamental step in our calculation of Poincar\'e polynomials for homomorphism spaces.

\begin{prop}\label{prop: homology of hom hat}
For $m\geqs 1$, the reduced Hilbert--Poincar\'e  
series of $\widehat{\Hom}(\Z^m,G)_1$ is given by
\begin{equation} \label{Phat}
P(\widehat{\Hom}(\Z^m,G)_1;q,s) =
A_W (q) \sum_{w\in W} \frac{(\det(1+sw)-1)^m}{\det(1-q^2 w)}.
\end{equation}
In particular, setting $s=q$ gives the reduced Poincar\'e series of $\widehat{\Hom}(\Z^m,G)_1$. 

When $m=0$, the same formulas yield the (unreduced) Hilbert--Poincar\'e and Poincar\'e series of the one-point space $\widehat{\Hom}(\Z^0,G)_1$.
\end{prop}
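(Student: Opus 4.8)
The plan is to read \eqref{Phat} off the Hilbert--Poincar\'e series \eqref{eqn: Poincere series Comm 3} of $\Comm(G)_1$ by extracting the coefficient of $t^m$, using the stable splitting of $\Comm(G)_1$. Applying Proposition~\ref{prop: X(q,G) decomposition} to $\Comm(G)=\X(2,G)$ (and using $F_k/\Gamma_k^2=\Z^k$) gives $\Sigma\Comm(G)_1\simeq\Sigma\bigvee_{k\geq1}\widehat{\Hom}(\Z^k,G)_1$, hence a graded isomorphism $\widetilde H^\ast(\Comm(G)_1)\cong\bigoplus_{k\geq1}\widetilde H^\ast(\widehat{\Hom}(\Z^k,G)_1)$. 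The point I would establish next --- following the construction in \cite{stafa.comm}, where the map $\Theta$ of \eqref{Theta} carries the James filtration of $J(T)$ onto the filtration realizing the wedge summands above --- is that this isomorphism respects the refined gradings: under the identification \eqref{Comm}, the summand $\widetilde H^\ast(\widehat{\Hom}(\Z^k,G)_1)$ corresponds to $\bigoplus_j\bigl[H^\ast(G/T)\otimes\T^\ast[\widetilde H_\ast(T)]_{j,k}\bigr]^W$, the part built from the $k$--fold tensors, compatibly with the $H^\ast(G/T)$--degree (tracked by $q$) and the internal cohomological degree (tracked by $s$). In terms of the tri-graded series this says precisely that $P(\widehat{\Hom}(\Z^m,G)_1;q,s)$ is the coefficient of $t^m$ in $P(\Comm(G)_1;q,s,t)$.

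Granting this, \eqref{Phat} is immediate from \eqref{eqn: Poincere series Comm 3}: expanding the geometric series
\[
\frac{1}{1-t(\det(1+sw)-1)}=\sum_{m\geq0}t^m\bigl(\det(1+sw)-1\bigr)^m
\]
(valid formally, since $\det(1+sw)-1$ has no constant term in $s$), the coefficient of $t^m$ in $A_W(q)\sum_{w\in W}\frac{1}{\det(1-q^2w)\bigl(1-t(\det(1+sw)-1)\bigr)}$ is exactly $A_W(q)\sum_{w\in W}\frac{(\det(1+sw)-1)^m}{\det(1-q^2w)}$. Setting $s=q$ then gives the ordinary reduced Poincar\'e series, since the tensor degree does not affect the cohomological degree. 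For $m=0$ the right-hand side of \eqref{Phat} reduces to $A_W(q)\sum_{w\in W}\det(1-q^2w)^{-1}$, and it remains to check that this equals $1$, the (unreduced) Poincar\'e series of the one-point space $\widehat{\Hom}(\Z^0,G)_1$. This is Molien's theorem: $\tfrac{1}{|W|}\sum_{w\in W}\det(1-q^2w)^{-1}$ is the Hilbert series of $\R[x_1,\dots,x_r]^W$ with each $x_i$ placed in degree $2$, which by Shephard--Todd--Chevalley equals $\prod_{i=1}^r(1-q^{2d_i})^{-1}=A_W(q)^{-1}$; equivalently, it is the $t=0$ specialization of \eqref{eqn: Poincere series Comm 3}, matching the fact that $H^\ast(G/T)^W=\Q$ is concentrated in degree $0$.

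The main obstacle is the grading compatibility asserted in the first paragraph: one must verify that the index of a wedge summand in the stable splitting of $\Comm(G)_1$ agrees with the tensor-length grading built into the definition of the Hilbert--Poincar\'e series of $\Comm(G)_1$. Once that bookkeeping is settled, the remainder is the formal power-series manipulation above, together with the classical Molien identity handling the $m=0$ edge case.
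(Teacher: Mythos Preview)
Your proposal is correct and follows essentially the same approach as the paper: expand the geometric series in $t$ and identify the coefficient of $t^m$ with the bigraded series of $\widehat{\Hom}(\Z^m,G)_1$. The paper establishes the grading-compatibility step you flag by showing directly that $G/T\times_W\widehat{T^m}\to\widehat{\Hom}(\Z^m,G)_1$ is a rational cohomology isomorphism (via \cite[Theorem~6.3]{stafa.comm} and triviality of $H^*((G/T)/W;\Q)$), and relegates the $m=0$ Molien identity to a remark after the proof.
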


When $m=0$, this result asserts that the above series reduces to the constant series 1. An algebraic explanation of this fact, in terms of Molien's Theorem, is given at the end of this section.

The bigrading in this Hilbert--Poincar\'e series arises from applying the  homology isomorphism (\ref{hom-hat-iso}) described in the proof, together with  the K\"unneth Theorem. 
More specifically, let $\widehat{T^m}$ denote the $m$--fold smash product of the maximal torus $T\leqs G$ with itself. Then the coefficient of $q^i s^j$ in the above Hilbert--Poincar\'e series is the rank of the subspace $[H^i (G/T) \otimes H^j( \widehat{T^m})]^W$ of $W$--invariant elements.

\begin{proof}
First rearrange the terms in the Hilbert--Poincar\'e series of $\Comm(G)_1$:
\begin{align*}
\ds P(\Comm(G)_1;q,s,t)
	 &= A_W (q) \sum_{w\in W}\frac{1}{\det(1-q^2w)
	 		(1-t(\det(1+sw)-1))}\\
	 &= A_W (q) \sum_{w\in W}\frac{\sum_{m=0}^\infty
	 		(t(\det(1+sw)-1)))^m}{\det(1-q^2w)}\\
	 &= A_W (q) \sum_{w\in W} \sum_{m=0}^\infty \frac{
	 		(\det(1+sw)-1))^m t^m}{\det(1-q^2w)}\\
	 &= A_W (q) \sum_{m=0}^\infty \sum_{w\in W} \frac{
	 		(\det(1+sw)-1))^m t^m}{\det(1-q^2w)}\\
	 &= \sum_{m=0}^\infty \left( A_W (q) \sum_{w\in W} \frac{
	 		(\det(1+sw)-1))^m}{\det(1-q^2w)}\right) t^m.\\
\end{align*}
We claim that after setting $s=q$, the coefficient of $t^m$ in $P(\Comm(G)_1;q,s,t)$ is the 
Poincar\'e series of the stable wedge summand
$\widehat{\Hom}(\Z^m,G)_1$ appearing in the decomposition of $\Comm(G)_1$
given by Proposition~\ref{prop: X(q,G) decomposition}.

Recall that our tri-grading of the (co)homology of $\Comm(G)_1$ comes from the natural map
$$\Theta\co G/T \times_W J(T)\maps \Comm(G)_1,$$
 (see (\ref{Theta}))   
which induces isomorphisms in (rational) cohomology. 
On the left-hand side, we have
$$H^*(G/T \times_W J(T)) \isom \big(H^*(G/T)\otimes H^*(J(T))\big)^W
\isom \big(H^*(G/T)\otimes \T^*[{\widetilde{H}_*(T)}]\big)^W.
$$
Let $\T^*_m[{\widetilde{H}_*(T)]}$ denote the dual of the submodule
$$\T_m[{\widetilde{H}_*(T)}]\subset  \T[{\widetilde{H}_*(T)}]$$ 
of $m$--fold tensors. The action of $W$ preserves these submodules, so we obtain a decomposition
$$H^*(G/T \times_W J(T)) \isom \bigoplus_m \big(H^*(G/T)\otimes \T^*_m[{\widetilde{H}_*(T)}]\big)^W.$$
Note that for $m>0$, the terms in this decomposition are in fact the reduced cohomology of $G/T\cross_W \widehat{T^m}$, where $\widehat{T^m}$ denotes the $m$--fold smash product of $T$ with itself, 
so the coefficient of $t^m$ in $P(\Comm(G)_1;q,s,t)$ is the (bigraded, reduced) Hilbert--Poincar\'e series of $G/T\cross_W \widehat{T^m}$. Similarly, the $m=0$ term in  this decomposition is unreduced cohomology of $G/T\cross_W \widehat{T^0} = (G/T)/W$. Note that the rational cohomology of 
$G/T\cross_W 1\isom (G/T)/W$ is trivial, since the action of $W$ on $H^* (G/T)$ is the regular representation.

To complete the proof, it will suffice to show that  the map
\begin{equation}\label{hom-hat-iso}G/T\cross_W \widehat{T^m} \maps \widehat{\Hom}(\Z^m,G)_1\end{equation}
is an isomorphism in (rational) cohomology. As shown in the proof of \cite[Theorem 6.3]{stafa.comm}, the induced map
$$\big(G/T\cross_W \widehat{T^m} \big)/(G/T\cross_W 1) \maps\widehat{\Hom}(\Z^m,G)_1
$$
induces an equivalence in rational cohomology. But the map
$$G/T\cross_W \widehat{T^m} \maps \big(G/T\cross_W \widehat{T^m} \big)/(G/T\cross_W 1)$$ 
is also an equivalence in rational cohomology, because the rational cohomology of 
$G/T\cross_W 1\isom (G/T)/W$ is trivial, as noted above.\end{proof}
 
Baird's formula (\ref{Baird}), together with the K\"unneth Theorem, provides a bigraded Hilbert--Poincar\'e series for $\Hom(\Z^n,G)_1$, in which the coefficient of $q^i s^j$ records the rank of $[H^i (G/T)\otimes H^j(T^n)]^W$. We now compute this series.

\begin{thm}\label{thm: Poincare series of Hom}
The homology of the component of the trivial representation
in the space of commuting $n$-tuples in $G$ is given by
the following Hilbert--Poincar\'e series:  
\begin{equation*}\label{eqn: Hilb-Poincare series of Hom(Zn,G)1}
P(\Hom(\Z^n,G)_1;q,s)=A_W (q)\sum_{w\in W} 
\left(\sum_{k=0}^n {n \choose k} \frac{(\det(1+sw)-1)^k}{\det(1-q^2w)} \right).
\end{equation*}  
The Poincar\'e series of 
$\Hom(\Z^n,G)_1$ is given by
\begin{equation}\label{eqn: Poincare series of Hom(Zn,G)1}
P(\Hom(\Z^n,G)_1;q)=A_W (q) \sum_{w\in W} \frac{\det(1+qw)^n}{\det(1-q^2w)}.
\end{equation}
\end{thm}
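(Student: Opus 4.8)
The plan is to combine Baird's cohomological formula~(\ref{Baird}) and the K\"unneth theorem with Proposition~\ref{prop: homology of hom hat}, and then to conclude with one application of the binomial theorem.

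First I would rewrite $H^*(T^n)$ as a bigraded $W$-module. Since $W$ acts \emph{diagonally} on $T^n$, the same element $w\in W$ acting on each of the $n$ factors, the K\"unneth isomorphism $H^*(T^n)\isom H^*(T)^{\otimes n}$ is $W$-equivariant; expanding $H^*(T)=\mathbf{k}\oplus\widetilde{H}^*(T)$ over subsets $S\subseteq\{1,\dots,n\}$ then gives a $W$-equivariant, degree-preserving splitting
\[
H^*(T^n)\;\isom\;\bigoplus_{k=0}^n\ \bigoplus_{|S|=k}\widetilde{H}^*(T)^{\otimes k}\;\isom\;\bigoplus_{k=0}^n {n \choose k}\,\widetilde{H}^*(\widehat{T^k}),
\]
where $\widehat{T^k}$ is the $k$-fold smash product and the $k=0$ summand is $\mathbf{k}$ in degree $0$. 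Because the coefficient field has characteristic $0$ or prime to $|W|$, tensoring with $H^*(G/T)$ and taking $W$-invariants is exact, so
\[
[H^*(G/T)\otimes H^*(T^n)]^W\;\isom\;\bigoplus_{k=0}^n {n \choose k}\,[H^*(G/T)\otimes \widetilde{H}^*(\widehat{T^k})]^W .
\]

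Next I would read off ranks. By~(\ref{Baird}) the $q^i s^j$-coefficient of $P(\Hom(\Z^n,G)_1;q,s)$ is $\rk[H^i(G/T)\otimes H^j(T^n)]^W$, and by the remark following Proposition~\ref{prop: homology of hom hat} the $q^i s^j$-coefficient of $P(\widehat{\Hom}(\Z^k,G)_1;q,s)$ is $\rk[H^i(G/T)\otimes H^j(\widehat{T^k})]^W$, this remaining valid in the degenerate case $k=0$, where the series is the constant $1$. Summing the displayed decomposition therefore gives
\[
P(\Hom(\Z^n,G)_1;q,s)\;=\;\sum_{k=0}^n {n \choose k}\,P(\widehat{\Hom}(\Z^k,G)_1;q,s),
\]
and substituting formula~(\ref{Phat}) and pulling the sum over $w\in W$ outside yields the first displayed identity of the theorem. (The ungraded version of this identity could also be obtained from the stable splitting of Proposition~\ref{dec}; the bigraded refinement comes out most directly from Baird's description.) Finally, putting $s=q$, the inner sum over $k$ collapses by the binomial theorem,
\[
\sum_{k=0}^n {n \choose k}\bigl(\det(1+qw)-1\bigr)^k\;=\;\bigl(1+(\det(1+qw)-1)\bigr)^n\;=\;\det(1+qw)^n ,
\]
which gives~(\ref{eqn: Poincare series of Hom(Zn,G)1}).

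The only step that needs genuine care is the first: one must check that the decomposition of $H^*(T^n)$ indexed by subsets $S$ is a decomposition of $W$-modules, which relies on the $W$-action being diagonal rather than permuting coordinates, and one must attribute the $k=0$ term correctly so that the constant series $1$ (the class of $H^0$ of the connected space $\Hom(\Z^n,G)_1$, equivalently $[H^*(G/T)]^W\isom\mathbf{k}$) matches the $m=0$ case of Proposition~\ref{prop: homology of hom hat}. Everything else --- K\"unneth for smash products, exactness of $W$-invariants in the semisimple case, and the binomial identity --- is routine bookkeeping.
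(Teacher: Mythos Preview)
Your argument is correct and reaches the same conclusion as the paper, but the route to the bigraded identity is genuinely different. The paper first obtains the single-variable Poincar\'e series from the stable splitting of $\Hom(\Z^n,G)_1$ (Proposition~\ref{dec}), and then \emph{separately} verifies that the bigrading coming from Baird's formula matches, by invoking the stable splitting of the simplicial space $G/T\times T^\bullet$ from~\cite{adem.cohen.gitler.bahri.bendersky} and checking, via naturality with respect to the projections $G/T\leftarrow G/T\times T^\bullet\to T^\bullet$, that the resulting equivalence $\gamma$ respects the K\"unneth decompositions and is $W$-equivariant. You instead work purely algebraically: you expand $H^*(T)^{\otimes n}=(\mathbf{k}\oplus\widetilde{H}^*(T))^{\otimes n}$ over subsets of $\{1,\dots,n\}$ and observe that, because $W$ acts diagonally, each subset-indexed summand is already a $W$-submodule. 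This yields the identity
\[
\rk[H^i(G/T)\otimes H^j(T^n)]^W=\sum_{k=0}^n\binom{n}{k}\rk[H^i(G/T)\otimes H^j(\widehat{T^k})]^W
\]
in one stroke, without any appeal to stable splittings. Your approach is shorter and more elementary; the paper's approach, on the other hand, makes explicit that the algebraic decomposition is realized by an actual (stable) map of spaces, which is a slightly stronger statement. Both arguments finish identically with the binomial theorem.
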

\begin{proof} Consider the bigraded
series
\begin{equation}\label{PHomqs}
P(\Hom(\Z^n,G)_1;q,s)= \sum_{k=0}^n {n \choose k}  A_W (q)
\left(\sum_{w\in W} \frac{(\det(1+sw)-1)^k}{\det(1-q^2w)} \right),
\end{equation}
which in which the summands are the Hilbert--Poincar\'e series from Proposition \ref{prop: homology of hom hat}. Since the terms in this sum 
match  the terms from
the stable decomposition of the space $\Hom(\Z^n,G)_1$
in equation (\ref{eqn: stable decomp Hom}), we see that setting
$q=s$ in (\ref{PHomqs}) yields  the Poincar\'e series of 
$\widehat{\Hom}(\Z^m,G)_1$:
\begin{align*}
P(\Hom(\Z^n,G)_1;q)= &A_W (q)\sum_{k=0}^n {n \choose k} 
\left(\sum_{w\in W} \frac{(\det(1+qw)-1)^k}{\det(1-q^2w)} \right)\\
=& A_W (q)
 \sum_{w\in W} \frac{\sum_{k=0}^n {n \choose k} (\det(1+qw)-1)^k}{\det(1-q^2w)}.
\end{align*}
Setting $x = \det(1+qw)-1$ in the binomial expansion
$$(1+x)^n = \sum_{k=0}^n  {n \choose k} x^k$$
 gives
$$\sum_{k=0}^n {n \choose k} (\det(1+qw)-1)^k  = \det(1+qw)^n,$$
yielding the simplified form (\ref{eqn: Poincare series of Hom(Zn,G)1}).

Finally, we check that the bigrading in $P(\Hom(\Z^n,G)_1;q,s)$ agrees with the bigrading arising from the K\"unneth Theorem applied to (\ref{Baird}). More precisely, we want to show that for each $i, j\geqs 0$,
\begin{equation}\label{qisj} \rk [H^i (G/T) \otimes H^j (T^n)]^W 
= \sum_{k=0}^n {n\choose k} \rk [H^i (G/T) \otimes H^j (\widehat{T^k})]^W.
\end{equation}
The spaces $G/T \cross T^n$ form a simplicial space $G/T \cross T^\bullet$ as $n$ varies, where the simplicial structure arises from the bar construction on $T$ (so the face and degeneracy maps are the identity on the $G/T$ factors). The main result of~\cite{adem.cohen.gitler.bahri.bendersky} provides a stable splitting of the spaces
$G/T\cross T^n$:
$$\gamma\co \Susp (G/T \cross T^n)
\srm{\heq}
\bigvee_{k=0}^n \bigvee_{{n\choose k}} \Susp (G/T \cross \widehat{T^k}).$$
 In fact, the stable splittings from~\cite{adem.cohen.gitler.bahri.bendersky} apply to any (sufficiently nice) simplicial space, and are natural with respect to simplicial maps. In particular, we can consider the simplicial maps   
$$G/T \longleftarrow G/T \cross T^\bullet \maps  T^\bullet,$$
where $G/T$ is viewed as a constant simplicial space, and $ T^\bullet$ is the bar construction on $T$. Naturality of the splittings yields a commutative diagram
\begin{center}
\begin{tikzcd}
\Susp (G/T)  & \ds{\bigvee_{k=0}^n \bigvee_{{n\choose k}} \Susp (G/T \cross \widehat{T^k})}\arrow{l} \arrow{r} &\ds{\bigvee_k \bigvee_{{n\choose k}} \Susp \widehat{T^k}}\\
\Susp (G/T) \arrow{u}& \Susp (G/T \cross T^n) \arrow{r}  \arrow{l}   \arrow{u}{\gamma} &
		\Susp T^n, \arrow{u}\\
\end{tikzcd}
\end{center}
in which the vertical maps are weak equivalences. Commutativity 
implies that the map on cohomology induced by $\gamma$ respects the K\"unneth decompositions of 
$H^p  (\Susp (G/T \cross T^n))$ and  $H^p (\Susp (G/T \cross \widehat{T^k}))$ ($p\geqs 1$), so that 
$\gamma$ induces isomorphisms
\begin{equation}\label{gamma*}\bigoplus_{k=0}^n  \bigoplus_{{n\choose k}} (H^i(G/T)\otimes H^j( \widehat{T^k}))
\srm{\isom} H^i (G/T)\otimes H^j ( \widehat{T^k})\end{equation}
for each $i,j\geqs 0$.
Moreover, $W$ acts simplicially on $G/T\cross T^\bullet$, so naturality implies that $\gamma$ is $W$--equivariant, and hence the maps (\ref{gamma*}) induce isomorphisms when restricted to  $W$--invariants.
This establishes the desired equality (\ref{qisj}).
\end{proof}

\begin{rmkMT} Since $\Hom(\Z^n,G)_{1}$ is path connected, the constant term in its Poincar\'e series must be 1. This can be understood in terms of a classical theorem of Molien \cite{molien1897} 
(also see \cite[p. 289]{shephard1954finite}).
Let $R=\mathbf{k}[x_1,\dots,x_r]$ and $W$ be as above, with $x_1,\dots,x_r$ in degree 1.
 Molien's Theorem  states that the number of 
linearly independent elements in degree $m$ in the invariant ring 
$R^W=\mathbf{k}[x_1,\dots,x_r]^W$
is given by the coefficients of the generating function
$$
\sum_{m=0}^{\infty} l_m q^m = \frac{1}{|W|}\sum_{w\in W} \frac{1}{\det(1-qw)} .  
$$
Moreover,   
Chevalley \cite{chevalley1955invariants} 
and Shephard--Todd \cite{shephard1954finite} give the following 
generating function for $R^W=\mathbf{k}[f_1,\dots,f_r]$ 
$$\sum_{m=0}^{\infty} l_m q^m =\prod_{i=1}^r \frac{1}{(1-q^{d_i})},$$
Therefore, after doubling the degree of $q$ one obtains the equation
$$1 =\frac{\prod_{i=1}^r (1-q^{2d_i})}{|W|} \sum_{w\in W} \frac{1}{\det(1-q^2w)},$$  
which corresponds to the constant term in the Hilbert--Poincar\'e series
of the spaces of homomorphisms $\Hom(\Z^n,G)_{1}$ in Theorem \ref{thm: Poincare series of Hom}.
\end{rmkMT}

\begin{rmkH1}
By work of Gomez--Pettet--Souto~\cite{gomez.pettet.souto},
$$\pi_1 (\Hom(\Z^n, G)_1) \cong (\pi_1 G)^n.$$  
It follows that
\begin{equation}\label{GPS}\rk (H^1 (\Hom(\Z^n, G)_1)) = n \cdot \rk (H^1 G).\end{equation}
This can in fact be seen directly from the formula in Theorem~\ref{thm: Poincare series of Hom} by analyzing the coefficient of $q$. Indeed, any non-zero coefficient of $q$ must come from one of the terms $\det (1+qw)^n$.  We have 
$$\det (1+qw) = \prod (1+ \lambda(w)q),$$
where $\lambda(w)$ ranges over the eigenvalues of $w$ (counted with multiplicity). Hence 
the constant term of $\det (1+qw)$ is 1, and the coefficient of $q$ is the trace of $w$ (acting on $\mathfrak{t}^*$).
It follows that the coefficient of $q$ in $P(\Hom(\Z^n, G)_1; q)$ is precisely
$$\frac{n}{|W|} \sum_{w\in W} \Tr (w) = n \langle \chi, 1\rangle = n\cdot \rk ((\mathfrak{t}^*)^W),$$
where $\chi$ is the character of the representation of $W$ on $\mathfrak{t}^*$ and $\langle \chi, 1\rangle$ is the inner product of this character with the trivial 1-dimensional character.
Since this representation is isomorphic to the natural representation of $W$ on $H^1 (T; \C)$, we find that $\rk (H^1 (\Hom(\Z^n, G)_1)) = n\cdot \rk (H^1 (T; \C)^W)$. 
As discussed above, $H^*(G) \isom [H^\ast(G/T) \otimes H^\ast(T)]^W$, and
 the action of $W$ on $H^*(G/T)$ is the regular representation. Hence 
$$H^1 (G) \isom (H^1(T))^W,$$
and combining the previous two formulas yields (\ref{GPS}).
\end{rmkH1}

\section{Poincar\'e series of $\X(m,G)_1$}\label{sec: Poincare series of X(q,G)}

The following theorem describes the  Poincar\'e
series of $\X(m,G)_1$ for all $m \geq 2.$
Note that when $G$ is a compact connected Lie group, it follows from
 Bergeron--Silberman~\cite{bergeron2016note} that $\X(m,G)_1 = \Comm(G)_1$.

\begin{thm} Let $G$ be a reductive connected Lie group.
Then the natural inclusion maps
$$X(2,G)_1 \hookrightarrow X(3,G)_1 \hookrightarrow 
			\cdots \hookrightarrow X(m,G)_1 \hookrightarrow \cdots$$
all induce homotopy equivalences after one suspension.
In particular, the Hilbert--Poincar\'e series of
$\X(m,G)_1$, for all $m\geq 2$, is given by
\begin{equation}\label{eqn: Poincere series Comm 4}
P(\X(m,G)_1;q,s,t) =
	A_W (q) \sum_{w\in W}
	\frac{1}{\det(1-q^2w)(1-t(\det(1+sw)-1))},
\end{equation}
where $W$ is the Weyl group of a maximal compact subgroup $K\leqs G$.
\end{thm}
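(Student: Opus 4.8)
The plan is to reduce the statement to a question about the stable wedge summands $\widehat{\Hom}(F_k/\Gamma^m_k,G)_1$, and then show that the inclusion $\X(m,G)_1 \hookrightarrow \X(m+1,G)_1$ induces, summand-by-summand, a rational cohomology isomorphism $\widehat{\Hom}(F_k/\Gamma^m_k,G)_1 \to \widehat{\Hom}(F_k/\Gamma^{m+1}_k,G)_1$. Concretely, Proposition~\ref{prop: X(q,G) decomposition} gives $\Susp \X(m,G)_1 \simeq \Susp\bigvee_{k\geqs 1}\widehat{\Hom}(F_k/\Gamma^m_k,G)_1$, and these splittings come from the stable splitting machinery of~\cite{adem.cohen.gitler.bahri.bendersky} applied to the simplicial space $\{\Hom(F_n/\Gamma^m_n,G)_1\}_n$. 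The inclusions $\Hom(F_n/\Gamma^m_n,G)_1 \hookrightarrow \Hom(F_n/\Gamma^{m+1}_n,G)_1$ constitute a map of simplicial spaces, so by naturality of these splittings the inclusion $\X(m,G)_1\hookrightarrow \X(m+1,G)_1$ is, after one suspension, the wedge of the maps on summands. Hence it suffices to prove that each $\widehat{\Hom}(F_k/\Gamma^m_k,G)_1 \to \widehat{\Hom}(F_k/\Gamma^{m+1}_k,G)_1$ is a rational cohomology isomorphism; then $\bigvee_k$ of these is a rational equivalence of suspensions, and since all the spaces in sight have finitely generated homology in each degree, this gives the claimed homology isomorphisms, and the Hilbert--Poincar\'e series of $\X(m,G)_1$ agrees with that of $\Comm(G)_1=\X(2,G)_1$, which is~\eqref{eqn: Poincere series Comm 3}.

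First I would handle the reduction to compact $G$: by Bergeron's extension~\cite{bergeron} of Pettet--Souto, for a maximal compact $K\leqs G$ the inclusion $\Hom(F_n/\Gamma^m_n,K)\hookrightarrow \Hom(F_n/\Gamma^m_n,G)$ is a deformation retract, compatibly in $n$ and in $m$, so it induces a deformation retraction $\X(m,K)\to \X(m,G)$ restricting to the identity components; thus the whole statement reduces to the case $G$ compact and connected. Then the key input is the theorem of Bergeron--Silberman~\cite{bergeron2016note} quoted in the excerpt: for $G$ compact connected, every representation in the identity component $\Hom(F_k/\Gamma^m_k,G)_1$ is abelian, and in fact the natural map $\Hom(\Z^k,G)\to\Hom(F_k/\Gamma^m_k,G)$ restricts to a homeomorphism on identity components. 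Hence $\Hom(F_k/\Gamma^m_k,G)_1 = \Hom(\Z^k,G)_1$ as subspaces of $G^k$ for every $m\geqs 2$, the inclusions $\Hom(F_k/\Gamma^m_k,G)_1\hookrightarrow\Hom(F_k/\Gamma^{m+1}_k,G)_1$ are the identity, and consequently the induced maps $\widehat{\Hom}(F_k/\Gamma^m_k,G)_1\to\widehat{\Hom}(F_k/\Gamma^{m+1}_k,G)_1$ on the quotient spaces are homeomorphisms, a fortiori rational cohomology isomorphisms. In particular $\X(m,G)_1=\Comm(G)_1$ for all $m\geqs 2$ when $G$ is compact connected, as already noted in the excerpt just before the statement, and the Hilbert--Poincar\'e series is~\eqref{eqn: Poincere series Comm 3}.

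The remaining point is to record that homology isomorphism after a single suspension upgrades to a homology isomorphism of the original spaces: if $f\co A\to B$ induces an isomorphism $\widetilde H_*(\Susp A)\to\widetilde H_*(\Susp B)$, then by the suspension isomorphism $\widetilde H_*(\Susp A)\cong\widetilde H_{*-1}(A)$ (natural in $A$) it induces an isomorphism $\widetilde H_*(A)\to\widetilde H_*(B)$; applying this to $f\co \X(m,G)_1\to\X(m+1,G)_1$ gives exactly Theorem~\ref{thm: Poincare series of X(q,G) Intro}, and composing finitely many (or passing to the colimit, noting that rational homology commutes with the directed colimit along these inclusions) gives $P(\X(m,G)_1;q,s,t)=P(\Comm(G)_1;q,s,t)$ for all $m\geqs 2$. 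I expect the main obstacle to be purely bookkeeping rather than mathematical: verifying that the stable splittings of~\cite{adem.cohen.gitler.bahri.bendersky} are genuinely natural for the inclusion of simplicial spaces $\{\Hom(F_n/\Gamma^m_n,G)_1\}_n\hookrightarrow\{\Hom(F_n/\Gamma^{m+1}_n,G)_1\}_n$ — i.e. that this inclusion is a map of simplicially NDR pairs in the sense used in the proof of Proposition~\ref{dec} — so that the suspended inclusion really is the wedge of the summand maps; once the Bergeron--Silberman identification is in hand, even this is immediate since those summand maps are homeomorphisms. Compared to the other proofs in the paper this statement is therefore quite short, essentially a formal consequence of~\cite{bergeron2016note}, Bergeron's reductive-to-compact theorem, and the naturality of the stable decompositions.
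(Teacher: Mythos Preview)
Your approach is essentially the paper's: reduce to the stable wedge summands via Proposition~\ref{prop: X(q,G) decomposition} and its naturality, then use Bergeron together with Bergeron--Silberman to identify the summands across $m$. The one substantive difference is how the reductive (non-compact) case is handled. You assert that Bergeron's deformation retraction $\Hom(F_n/\Gamma^m_n,G)\to\Hom(F_n/\Gamma^m_n,K)$ is ``compatible in $n$'' and therefore descends to a retraction $\X(m,G)_1\to\X(m,K)_1$; this compatibility with the simplicial face and degeneracy maps is not stated in~\cite{bergeron} and you do not verify it, so as written this step is a gap. The paper avoids this issue entirely: it works directly with reductive $G$ at each fixed $n$, setting up a commutative diagram of cofibrations
\[
\begin{tikzcd}
S_{n,2}(G) \arrow{d} \arrow{r} & \Hom(\Z^n,G)_{1} \arrow{d} \arrow{r} &\widehat{\Hom}(\Z^n,G)_{1} \arrow{d}\\
S_{n,m}(G)  \arrow{r} & \Hom(F_n/\Gamma^m_n,G)_{1}\arrow{r} & \widehat{\Hom}(F_n/\Gamma^m_n,G)_{1},
\end{tikzcd}
\]
where $S_{n,m}(G)$ is the subspace of tuples with at least one coordinate equal to $1$. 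The middle vertical map is a homotopy equivalence by~\cite{bergeron} and~\cite{bergeron2016note}; the left vertical map is a homotopy equivalence by the Gluing Lemma, since $S_{n,m}(G)$ is built inductively as a pushout of subspaces of the form $\{(g_1,\dots,g_n):g_i=1\text{ for }i\in I\}$ to which the same results apply. A second application of the Gluing Lemma then gives that the right vertical map is a genuine homotopy equivalence, which is what the theorem claims (not merely a rational cohomology isomorphism, as you phrase your goal in the first paragraph). So the bookkeeping obstacle is real, but it lies in the reductive-to-compact reduction rather than in the naturality of the splittings for $m\to m+1$, and the paper's cofibration/Gluing Lemma argument is what resolves it.
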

\begin{proof}
By Proposition~\ref{prop: X(q,G) decomposition}, there is a stable 
decomposition of $\X(m,G)_1$
into a wedge sum
\begin{equation}\label{q-s}
\Sigma \X(m,G)_1 \simeq \Sigma
	\bigvee_{k \geq 1} \widehat{\Hom}(F_k/\Gamma^m_k,G)_1.
\end{equation}
Consider the commutative diagram of cofibrations
\begin{center}
\begin{tikzcd}
S_{n,2}(G) \arrow{d} \arrow{r} & \Hom(\Z^n,G)_{1} \arrow{d}{i} \arrow{r} &\widehat{\Hom}(\Z^n,G)_{1} \arrow{d}\\
S_{n,m}(G)  \arrow{r} & \Hom(F_n/\Gamma^m_n,G)_{1}\arrow{r} &
		\widehat{\Hom}(F_n/\Gamma^m_n,G)_{1},\\
\end{tikzcd}
\end{center}
where $S_{n,m}(G)$ is the subspace of $\Hom(F_n/\Gamma^m_n,G)_{1}$
consisting of $n$-tuples with at least one coordinate the identity,
and $m \geq 2$. 
The middle vertical map
$$i\co \Hom(\Z^n,G)_{1} \hookrightarrow  \Hom(F_n/\Gamma^m_n,G)_{1}$$
is a homotopy equivalence: by \cite{bergeron}, up to homotopy we can replace $G$ by a maximal compact subgroup, and by~\cite{bergeron2016note}, the map $i$ is a homeomorphism in the compact case.
The first vertical map
$$S_{n,2}(G) \hookrightarrow S_{n,m}(G)$$
is a homotopy equivalence by the Gluing Lemma
\cite{RBrown}, since these spaces can be built up inductively 
as pushouts of subspaces of the form 
$$\{(g_1, \ldots, g_n) \,:\, g_i =1 \textrm{ for all } i\in I\}$$
for various $I \subset \{1, \ldots n\}$, and on these subspaces the 
results from  \cite{bergeron} and \cite{bergeron2016note} apply.
Applying the Gluing Lemma again, the third vertical map
$$\widehat{\Hom}(\Z^n,G)_{1} \to \widehat{\Hom}(F_n/\Gamma^m_n,G)_{1}$$
is a homotopy equivalence as well, and the theorem follows from 
the decompositions (\ref{q-s}). 
\end{proof}

\section{Ungraded cohomology and $K$--theory}\label{ungraded-sec}

The \textit{ungraded cohomology} $H^u (X;R)$  of a space $X$
with coefficients in $R$ refers to the ungraded direct sum of all the cohomology groups
of $X$ as an $R$--module.
It is a classical result \cite{borel1953cohomologie}
that the ungraded cohomology of $G/T$ with rational coefficients, viewed as a $W$--module, is the
regular representation $\Q W$. This alone yields some interesting consequences.
Let $M$ be a graded $\Q W$-module. Then it follows that
$(H^u(G/T;\Q)\otimes M)^W \isom M.$ Applying this principle to  formulas (\ref{Baird}) and (\ref{Comm})
yields the following result.

\begin{prop}
Let $G$ be a compact and connected Lie group. Then
\begin{enumerate}
\item the ungraded rational cohomology of the compact and connected
Lie group $G$ is the same as the ungraded rational cohomology
of its maximal torus:
$$(H^u(G/T;\Q)\otimes H^u(T))^W\isom H^u(T);$$

\item the ungraded cohomology of $\Hom(F_n/\Gamma^m_n,G)_1$ is given by
$$H^u(\Hom(F_n/\Gamma^m_n,G)_1;\Q) \isom H^u(T^n;\Q)$$
for all integers  
$m\geq 2.$

\end{enumerate}
\end{prop}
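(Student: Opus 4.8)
The plan is to derive the Proposition directly from the two cohomological formulas already recorded in the excerpt, namely Baird's isomorphism (\ref{Baird}) and the analogous statement (\ref{Comm}), by passing from the graded to the ungraded setting. The only new input needed is the classical fact of Borel~\cite{borel1953cohomologie} that, as an \emph{ungraded} $\Q W$-module, $H^u(G/T;\Q)$ is isomorphic to the regular representation $\Q W$. So the first step is to isolate the purely algebraic lemma: if $M$ is any (graded or ungraded) $\Q W$-module, then
\begin{equation*}
(\Q W \otimes_\Q M)^W \isom M.
\end{equation*}
This is standard --- the map $m \mapsto \sum_{w\in W} w \otimes w^{-1}m$ (or, dually, evaluation at the identity coset) is a $\Q$-linear isomorphism onto the invariants, essentially because $\Q W \otimes M$ with the diagonal action is $\Q W \otimes M_0$ with $W$ acting only on the first factor, where $M_0$ denotes $M$ with trivial action; I would either cite this or give the one-line argument.

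With the lemma in hand, part~(1) is immediate: starting from $H^\ast(G;\Q) \isom [H^\ast(G/T;\Q)\otimes H^\ast(T)]^W$ (the $n=1$ case of (\ref{Baird}), as recalled in Section~\ref{sec: topology Hom}), I pass to ungraded cohomology on both sides; since tensor products and $W$-invariants commute with the formation of the underlying ungraded module, the right-hand side becomes $(H^u(G/T;\Q)\otimes H^u(T))^W$, and applying Borel's fact together with the algebraic lemma (with $M = H^u(T)$) identifies this with $H^u(T)$. For part~(2), I invoke the fact --- established in the proof of the previous theorem in Section~\ref{sec: Poincare series of X(q,G)} via Bergeron~\cite{bergeron} and Bergeron--Silberman~\cite{bergeron2016note} --- that the inclusion $\Hom(\Z^n,G)_1 \hookrightarrow \Hom(F_n/\Gamma^m_n,G)_1$ is a homotopy equivalence for every $m\geqs 2$ (and likewise reduces the reductive case to the compact case), so it suffices to treat $\Hom(\Z^n,G)_1$. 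Then Baird's isomorphism (\ref{Baird}) gives $H^\ast(\Hom(\Z^n,G)_1;\Q) \isom [H^\ast(G/T;\Q)\otimes H^\ast(T^n;\Q)]^W$; passing to ungraded modules and applying the lemma with $M = H^u(T^n;\Q)$ yields $H^u(\Hom(F_n/\Gamma^m_n,G)_1;\Q) \isom H^u(T^n;\Q)$, as claimed.

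The one point requiring a little care --- and the closest thing to an obstacle --- is the interchange of ``take ungraded underlying module'' with ``take $W$-invariants'' in a possibly infinite direct sum: for $\Hom(\Z^n,G)_1$ the cohomology is finite-dimensional so there is nothing to worry about, but one should note (for uniformity with the $\Comm(G)$-type statements elsewhere in the paper) that $W$-invariants of a direct sum of $\Q W$-modules is the direct sum of the invariants, which holds since taking $W$-invariants over $\Q$ is exact and commutes with arbitrary direct sums. I would state this observation explicitly and then the two isomorphisms fall out formally. No genuinely hard step is involved; the content is entirely in recognizing that the ungraded invariants of $\Q W \otimes M$ recover $M$.
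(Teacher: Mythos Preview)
Your proposal is correct and follows essentially the same approach as the paper: the paper states the algebraic principle $(H^u(G/T;\Q)\otimes M)^W \isom M$ (deduced from Borel's identification of $H^u(G/T;\Q)$ with the regular representation) immediately before the Proposition and simply says the result follows by applying this to the cohomological descriptions~(\ref{Baird}) and~(\ref{Comm}). Your write-up spells out the details the paper leaves implicit, including the Bergeron--Silberman reduction for part~(2), and is entirely in line with what the authors intend.
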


It is quite interesting, although not a surprise, that the
maximal torus $T\subset G$ plays a fundamental role in the
topology of nilpotent representations into $G$, similar
to the role it plays in the topology
of $G$ from the classical theory of Lie groups.
Recall that the rational cohomology of Lie groups can
be described by the cohomology of a product of as many  spheres 
of odd dimension as the rank of $G$ \cite{reeder1995cohomology}.
It would however be very compelling to   
understand, in a topological manner, the
regrading process that produces the cohomology of $\Hom(\Z^n,G)_1$ and
$\Comm(G)_1$ from the cohomology of $T^n$ and $J(T)$, respectively.

\begin{cor}\label{tot-rk}
Let $G$ be a compact and connected Lie group of rank $r$. Then
\begin{align*}
\sum_{k\geq 0} \rk ( H^k(\Hom(\Z^n,G)_1;\Q))=
\sum_{k\geq 0} \rk ( H^k(T^n;\Q) )= 2^{nr}.
\end{align*}
\end{cor}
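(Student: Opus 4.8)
The plan is to read both equalities off results already established, rather than from the closed formula of Theorem~\ref{thm: Poincare series of Hom}. For the right-hand equality, I would note that the maximal torus satisfies $T \homeo (S^1)^r$, hence $T^n \homeo (S^1)^{nr}$, so $H^*(T^n;\Q)$ is an exterior algebra on $nr$ degree-one generators and its total rank is $\sum_{j=0}^{nr}\binom{nr}{j} = 2^{nr}$.

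For the left-hand equality I would apply part~(2) of the preceding proposition with $m = 2$: since $F_n/\Gamma_n^2 = \Z^n$, that statement gives an isomorphism $H^u(\Hom(\Z^n,G)_1;\Q) \isom H^u(T^n;\Q)$ of ungraded rational vector spaces, and comparing total dimensions of the two sides finishes the chain of equalities. Equivalently, one can argue directly from Baird's isomorphism (\ref{Baird}) together with the fact that $H^u(G/T;\Q)$ is the regular representation $\Q W$: for any finite-dimensional $\Q W$-module $M$ one has $\dim_\Q (\Q W \otimes M)^W = \dim_\Q M$, and taking $M = H^*(T^n;\Q)$ gives $\sum_k \rk H^k(\Hom(\Z^n,G)_1;\Q) = \dim_\Q H^*(T^n;\Q) = 2^{nr}$. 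Since every ingredient is already proved, there is essentially no obstacle on this route.

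For completeness one can also verify the left-hand equality head-on by evaluating the formula $P(\Hom(\Z^n,G)_1;q) = A_W(q)\sum_{w\in W}\det(1+qw)^n/\det(1-q^2w)$ at $q = 1$, and this is the one place where some bookkeeping is needed. The prefactor $A_W(q) = \prod_{i=1}^r (1-q^{2d_i})/|W|$ vanishes to order $r$ at $q=1$, as does the denominator $\det(1-q^2 w)$ when $w = 1$; for $w \ne 1$ the denominator vanishes only to order $\dim (\mathfrak{t}^*)^w < r$ (by faithfulness of the $W$-action), so only the $w = 1$ term contributes in the limit $q\to 1$. A short computation of leading Taylor coefficients, using $\prod_i d_i = |W|$ and $\det(1+q\cdot 1)^n = (1+q)^{rn}$, shows that this term tends to $2^{rn} = 2^{nr}$. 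The main obstacle, such as it is, lies entirely in this limit bookkeeping; the ungraded-cohomology argument sidesteps it, so I would present that as the proof and mention the $q\to 1$ check only in passing.
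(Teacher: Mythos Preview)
Your proposal is correct and matches the paper's approach: the corollary is deduced immediately from part~(2) of the preceding proposition (with $m=2$) together with the standard computation of $\dim_\Q H^*((S^1)^{nr};\Q)=2^{nr}$. Your optional $q\to 1$ verification also mirrors the content of the Remark following Corollary~\ref{K-cor}, where the paper carries out exactly this limit computation.
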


\begin{rmk}
Having identified the total rank of the cohomology of these spaces, 
one can ask if they satisfy Halperin's Toral Rank Conjecture, which states that 
if a topological space $X$ has an almost free action of a torus of rank $k$, 
then the rank of the total cohomology of $X$ is at least $2^k$; 
see \cite[Problem 1.4]{halperin1985}. In this setting, the conjecture 
predicts that if a torus $T'$ acts almost freely on the space of homomorphisms
$\Hom(\Z^n,G)_1$, then the rank of $T'$ must be at least $nr.$ Hence it would 
be interesting to understand almost-free torus actions on these spaces.
\end{rmk}

Having identified the total rank of the cohomology of $\Hom(\Z^n,G)_1$, 
we can also identify its rational complex $K$--theory.

\begin{cor} \label{K-cor}
Let $G$ be a compact and connected Lie group of rank $r$. Then
\begin{align*}
\rk ( K^i(\Hom(\Z^n,G)_1))= 2^{nr-1}.
\end{align*}
for every $i$.
\end{cor}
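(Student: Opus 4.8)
The plan is to deduce this from Corollary \ref{tot-rk} together with the fact that the rational $K$--theory of a finite CW--complex $X$ is determined, as a $\Z/2$--graded vector space, by its rational cohomology: by the Chern character, $K^0(X)\otimes\Q \isom \bigoplus_{k} H^{2k}(X;\Q)$ and $K^1(X)\otimes\Q\isom\bigoplus_k H^{2k+1}(X;\Q)$. So the total rank $\rk K^0 + \rk K^1$ equals the total rank of rational cohomology, which Corollary \ref{tot-rk} identifies as $2^{nr}$. It therefore suffices to show that the even and odd Betti numbers of $\Hom(\Z^n,G)_1$ coincide, i.e. that $\sum_k \rk H^{2k} = \sum_k \rk H^{2k+1} = 2^{nr-1}$; equivalently, that the Euler characteristic $\chi(\Hom(\Z^n,G)_1)$ vanishes.

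The cleanest way to see $\chi = 0$ is via the Poincar\'e series formula of Theorem \ref{thm: Poincare series of Hom}: the Euler characteristic is the value of $P(\Hom(\Z^n,G)_1;q)$ at $q=-1$ (since the homology is finitely generated, substitution is legitimate). Evaluating the closed form
$$
P(\Hom(\Z^n,G)_1;q)=\frac{\prod_{i=1}^r (1-q^{2d_i})}{|W|}\sum_{w\in W}\frac{\det(1+qw)^n}{\det(1-q^2w)}
$$
at $q=-1$: the prefactor $\prod_i (1-q^{2d_i})$ vanishes at $q=-1$ since $q^2=1$ there, so each factor $1-q^{2d_i}=0$. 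One must check that the sum $\sum_{w\in W}\det(1+qw)^n/\det(1-q^2w)$ does not have a compensating pole at $q=-1$ of high enough order. The denominator $\det(1-q^2w)=\prod_\lambda(1-\lambda q^2)$ (product over eigenvalues $\lambda$ of $w$) vanishes at $q=-1$ exactly when $w$ has $1$ as an eigenvalue, with order equal to the multiplicity of the eigenvalue $1$; since $W$ acts on $\mathfrak{t}^*$ with no non-zero fixed vector only when $G$ is semisimple, in general the multiplicity of the eigenvalue $1$ is at most $r$, while the prefactor contributes a zero of order exactly $r$ (one from each factor $1-q^{2d_i}$, each of which is simple at $q=-1$). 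Hence the product extends to $q=-1$ and one needs to verify the limit is $0$. The term-by-term analysis: only $w$ with eigenvalue $1$ of multiplicity exactly $r$, i.e. $w=1$, can produce a pole matching the order of the zero; for $w=1$ the contribution is $\prod_i(1-q^{2d_i})\cdot(1+q)^{rn}/(1-q^2)^r = (1+q)^{rn-r}\prod_i(1-q^{2d_i})/(1-q)^r$, which still vanishes at $q=-1$ provided $rn > r$, i.e. $n\geq 2$; a separate check handles $n=1$, where $\Hom(\Z,G)_1=G$ has vanishing Euler characteristic classically (the rational cohomology of $G$ is an exterior algebra on $r\geq 1$ odd generators, hence $\chi(G)=0$; when $r=0$ the group is trivial and the statement is vacuous for $K$--theory up to the stated rank).

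The main obstacle is the bookkeeping in the previous paragraph: one has to be careful that no single summand in $\sum_w \det(1+qw)^n/\det(1-q^2w)$ contributes a pole at $q=-1$ whose order exceeds the order of the zero coming from $A_W(q)$, and that after clearing the removable singularities the remaining factor of $(1+q)$ really is present. An alternative, more structural route that avoids substitution entirely is to observe that $\Hom(\Z^n,G)_1$ admits a free action of a torus when $\pi_1(G)$ contains a free abelian factor, or more robustly to use that $G$ acts on $\Hom(\Z^n,G)_1$ by simultaneous conjugation with the maximal torus $T$ acting with no global fixed points unless $G=T$, forcing $\chi=0$ by the standard fact that a space with a non-trivial circle action has vanishing Euler characteristic; combined with the $G=T$ case (where $\Hom(\Z^n,T)_1=T^n$ has $\chi(T^n)=0$ for $r\geq 1$), this gives $\chi(\Hom(\Z^n,G)_1)=0$ in all cases with $r\geq 1$. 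Either way, once $\chi=0$ is established, the equality $\rk K^0 = \rk K^1$ follows, and combined with $\rk K^0 + \rk K^1 = 2^{nr}$ from Corollary \ref{tot-rk} we conclude $\rk K^i(\Hom(\Z^n,G)_1) = 2^{nr-1}$ for $i=0,1$, completing the proof.
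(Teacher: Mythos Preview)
Your overall strategy is correct and matches the paper's: use the Chern character to reduce to showing $\chi(\Hom(\Z^n,G)_1) = 0$, then combine with Corollary~\ref{tot-rk}. Your primary route to $\chi = 0$---evaluating the Poincar\'e series at $q=-1$---is valid, and in fact the paper carries out precisely this term-by-term analysis in the Remark immediately following the Corollary. (Your caution that one needs $n\geq 2$ is unnecessary: for $w=1$ the expression you wrote still carries the factor $\prod_i(1-q^{2d_i})$, which itself vanishes to order $r$ at $q=-1$, so the $w=1$ term vanishes for all $n\geq 1$ when $r\geq 1$; your separate treatment of $n=1$ is therefore redundant but harmless.)

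The paper's own proof of the Corollary takes a shorter path to $\chi=0$: rather than analyze the formula, it uses the fibration sequence $G/T\times T^n \to G/T\times_W T^n \to BW$ (equivalently, the $|W|$-fold covering $G/T\times T^n \to G/T\times_W T^n$), which gives $\chi(G/T\times_W T^n) = \chi(G/T)\chi(T^n)/|W| = 0$ since $\chi(T^n)=0$; Baird's isomorphism then transports this to $\Hom(\Z^n,G)_1$. This sidesteps all the pole-versus-zero bookkeeping.

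Your alternative ``structural'' argument, however, contains a genuine error: the conjugation action of $T$ on $\Hom(\Z^n,G)_1$ \emph{does} have global fixed points---most obviously the trivial representation, and in fact all of $T^n \subset \Hom(\Z^n,G)_1$ is fixed pointwise by $T$. A circle action with fixed points does not force vanishing Euler characteristic (consider the rotation action on $S^2$), so this line of reasoning does not work as stated. You should either drop it or replace it with the covering/fibration argument above.
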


\begin{proof} Since $\Hom(\Z^n,G)_1$ is a finite CW complex, the Chern character provides an isomorphism from $K^i(\Hom(\Z^n,G)_1)\otimes \bbQ$   
to the sum of the rational cohomology groups of $\Hom(\Z^n,G)_1$ in dimensions congruent to $i$ (mod 2). 
The fibration sequence
$$G/T \times T^n \maps G/T \times_W T^n \maps BW$$
implies that the Euler characteristic of $G/T \times_W T^n$ is zero, and the same follows for
$\Hom(\Z^n,G)_1$ by Baird's result (\ref{Baird}). Hence 
\begin{align*}
\sum_{k  \textrm{ even}}  \rk ( H^k(\Hom(\Z^n,G)_1) )= 
				\sum_{k  \textrm{ odd}}  \rk ( H^k(\Hom(\Z^n,G)_1)),  
\end{align*}
and the result follows from Corollary~\ref{tot-rk}.
\end{proof}

\begin{question} When $G$ is a product of groups of the form $SU(r)$, $U(q)$, and $Sp(k)$, Adem and Gomez~\cite[Corollary 6.8]{AG-equivK} showed that the $G$--equivariant $K$--theory ring $K^*_G (\Hom(\Z^n, G))$ is free as module of rank $2^{nr}$ over the representation ring $R(G)$ (for such $G$ we have $\Hom(\Z^n, G)_1 = \Hom(Z^n, G)$).
In light of Corollary~\ref{K-cor}, it is natural to ask whether the map $R(G) \otimes K^*(\Hom(\Z^n,G)) \to K_G^* (\Hom(\Z^n,G))$ is an isomorphism for these groups.
\end{question}

\begin{rmk} We now explain how to compute $\rk (H^u(\Hom(\Z^n,G)_1))$, and the Euler characteristic $\chi(\Hom(\Z^n,G)_1)$, directly from Theorem~\ref{thm: Poincare series of Hom}, by setting $q=1$ or $-1$ in the formula
\begin{equation}\label{p}P(\Hom(\Z^n,G)_1; q) = 
A_W (q) \sum_{w\in W}  \frac{\det(1+qw)^n}{\det(1-q^2w)}.
\end{equation}
To do so, we must compute the multiplicities of $\pm 1$ as roots of $A_W (q)$ and of $\det (1-q^2 w)$ ($w\in W)$. We have 
$$A_W (q) = \frac{1}{|W|}\prod_{i=1}^r (1-q^{d_i})(1+q^{d_i}),$$
so the multiplicity of $\pm 1$ as a root of $A_W (q)$ is $r = \rk (G)$. On the other hand, 
$$\det (1-q^2 w) = \prod_i (1-q^2 \lambda_i (w))^{n_i},$$
where the numbers $\lambda_i (w)$ are the eigenvalues of $w$ (acting on $\mathfrak{t}^\ast$) and $n_i$ is the dimension of the corresponding 
eigenspace. So the multiplicity of $\pm 1$ is the dimension of the eigenspace for $\lambda_i (w) = 1$, which is strictly less than $\rk (G)$ unless $w = 1$, in which case it is exactly $\rk (G)$. Canceling factors of $1\pm q$ in 
$$\frac{\prod_{i=1}^r (1-q^{d_i})(1+q^{d_i})}{\det(1-q^2w)}\det(1+qw)^n$$
and plugging in $q=\pm 1$, we see that all terms for $w\neq 1$ are zero.

Now consider what happens when we plug in $q=-1$ into (\ref{p}). The term for $w=1$ contains the determinant of $I +qI= I-I = 0$ as a factor, so it too vanishes.
This gives another proof that $\chi (\Hom(\bbZ^n, G)_1) = 0$.

To calculate $\rk (H^u (\Hom(\bbZ^n, G)_1))$, we must analyze the $w=1$ term of (\ref{p}) more closely. This term has the form
\begin{align*}\frac{\prod_{i=1}^r (1+q^{d_i})(1-q^{d_i})}{|W|} \cdot & \frac{\det((1+q)I)^n}{\det((1-q^2)I)}\\
=  &\frac{\prod_{i=1}^r (1+q^{d_i})(1-q^{d_i})}{|W|}  \cdot \frac{(1+q)^{rn}}{(1-q^2)^r}\\
= &\frac{\prod_{i=1}^r (1+q^{d_i})(1+q+q^2 +\cdots +q^{d_i-1})}{|W|} \cdot \frac{(1+q)^{rn}}{(1+q)^r}.
\end{align*}
Plugging in $q=1$, we find that
$$\rk (H^u (\Hom(\bbZ^n, G)_1)) = 
\frac{\left(\prod_{i=1}^r 2d_i  \right) 2^{rn}}{|W| \cdot 2^r}
=2^{rn},$$
where we have used the equation $\prod_{i=1}^r d_i = |W|$. 
\end{rmk}

\section{Examples of Hilbert--Poincar\'e series}\label{sec: examples Poincare}

Using Theorem \ref{thm: Poincare series of Hom} and Table
\ref{table: characteristic degrees}, one can obtain explicit 
formulas for the Hilbert--Poincar\'e and Poincar\'e series described in this article.
We demonstrate this for some low-dimensional
Lie groups and for the exceptional Lie group $G_2$. We give only the formulas
for the Poincar\'e series. The Hilbert--Poincar\'e series can then
be deduced similarly from Theorem \ref{thm: Poincare series of Hom} and
are left to the reader. 

\begin{ex}[$G=SU(2)$]
The maximal torus of $SU(2)$ has rank 1
and the Weyl group is isomorphic to $W=\Z_2$. 
The dual space $\mathfrak{t}$ is 1-dimensional, and $W$ is represented as
$\{1,-1\} \subset GL(\mathfrak{t}^\ast)$.  
The only characteristic degree of $W$ is $d_1=2$.
Therefore, we have
$A_W (q)=(1-q^4)/2$ and $\det(1+qw)$ equals $1+q$ and $1-q$, 
for $w$ equal to 1 and -1, respectively, and
$$
\frac{\det(1+qw)^n}{\det(1-q^2w)}=
\begin{cases}
      \ds\frac{(1+q)^n}{1-q^2} 	& \mbox{if $w=1$}, \\
      		\\
      \ds\frac{(1-q)^n}{1+q^2} 		& \mbox{if $w=-1$}. \\
\end{cases}
$$
We know the space of commuting $n$-tuples in $SU(n)$ is path connected,
so  
it equals the component of the trivial representation.
\begin{align*}
P(\Hom(\Z^n,SU(2));q)&= A_W (q) \sum_{w\in \Z_2}  \frac{\det(1+qw)^n}{\det(1-q^2w)}\\
	&= \frac{1}{2}\bigg((1+q)^n (1+q^2) + (1-q)^n(1-q^2)\bigg), 
\end{align*}
which agrees with calculations in \cite{bairdcohomology}. 
\end{ex}

\begin{ex}[$G=U(2)$]
The maximal torus of $U(2)$ has rank 2, the Weyl group
$W \isom \Z_2$ acts on $\mathfrak{t}^*$ via the matrices   
$$ \left\{\left(
\begin{array}{cc}
1 & 0 \\
0 & 1 \\
\end{array}
\right), \left(
\begin{array}{cc}
0 & 1 \\
1 & 0 \\
\end{array}
\right)\right\},$$
and the characteristic degrees of $W$   are $d_1=1,d_2=2$.
We know the space of commuting $n$-tuples in $U(n)$ is path connected,
so again it equals the component of the trivial representation. We have
$A_W (q)=(1-q^2)(1-q^4)/2$ and
$$
\frac{\det(1+qw)^n}{\det(1-q^2w)}=
\begin{cases}
      \ds\frac{(1+q)^{2n}}{(1-q^2)^2} 	& \mbox{if $w=1$}, \\
      		\\
      \ds\frac{(1-q^2)^n}{1-q^4} 			& \mbox{if $w\neq 1$}. \\
\end{cases}
$$
Therefore, we get the following 
Poincar\'e series
$$
P(\Hom(\Z^n,U(2));q)=\dfrac{1}{2}\bigg((1+q)^{2n} (1+q^2) + (1-q^2)^{n+1}\bigg).$$
For example, we get
\begin{align*}
P(\Hom(\Z^2,U(2));q)&=	1+2q+2q^2+4q^3+5q^4+2q^5,\\
P(\Hom(\Z^3,U(2));q)&= 1 + 3q + 6q^2 + 13q^3 + 18q^4 +13q^5 + 6q^6 + 3q^7 + q^8,\\
P(\Hom(\Z^4,U(2));q)&= 1 + 4q + 12q^2 + 32q^3 + 54q^4 +56q^5 + 44q^6 + 32q^7\\
&\,\,\,\,\,\,\,\,\,\,\, + 17q^8+4q^9.
\end{align*}
\end{ex}

\begin{ex}[$G=U(3)$] The maximal torus has rank 3 and the Weyl group is
the symmetric group on 3 letters
$$W=\Sigma_3=\{e,(12),(13),(23),(123),(132)\}.$$
The characteristic degrees of $W$ are $1$, $2$, and $3$, so 
$$A_W (q) = \frac{1}{6}(1-q^2)(1-q^4)(1-q^6).$$
The matrix representations $W \leqslant GL(\mathfrak{t}^\ast)$ can
be obtained by applying each permutation in $\Sigma_3$ to the
rows of the $3\times 3$ identity matrix $I_{3\times 3}$. This can be
done in general for the Weyl group $\Sigma_n$ of $U(n).$
For the transpositions $w=(12), (13), (23)\in W$ and for the 3-cycles we obtain the
same determinants, respectively, since they are in the same conjugacy class.
Hence we get
$$\ds
\frac{\det(1+qw)^n}{\det(1-q^2w)}=
\begin{cases}
      \ds\frac{(1+q)^{3n}}{(1-q^2)^3} 	& \mbox{if $w=e$}, \\
      		\\
      \ds\frac{(1+q)^n(1-q^2)^n}{(1-q^2)(1-q^4)} 	& \mbox{if $w=(12), (13), (23)$}, \\
      		\\
      \ds\frac{(1+q^3)^n}{1-q^6} 			& \mbox{if $w=(123),(132)$}. \\
\end{cases}
$$
Therefore, the Poincar\'e series is given by
\begin{align*}
P(\Hom&(\Z^n,U(3)),q)  
	=\frac{1}{6} \bigg(   
	(1+q^2)(1+q^2+q^4)(1+q)^{3n} \\ &+
	3 (1-q^6)(1+q)^n(1-q^2)^n +
	2 (1-q^2)(1-q^4)(1+q^3)^n
	\bigg).
\end{align*}
In particular, the following are the Poincar\'e series for pairwise commuting pairs,
triples, and quadruples in $U(3)$, respectively:
\begin{align*}
P(\Hom(\Z^2,U(3)),q) = & 1+2q+2q^2+ 4q^3+7q^4+10q^5+11q^6+8q^7+8q^8\\		
		&+8q^9+3q^{10}\\
P(\Hom(\Z^3,U(3)),q)=  & \,1 +3q + 6q^2 + 14q^3 + 30q^4 + 54q^5 + 73q^6 + 75q^7 + 75q^8\\
				& \,\,\,\,+ 73q^9 + 54q^{10} + 30q^{11} + 14q^{12} + 6q^{13} + 3q^{14} + q^{15}, \\
P(\Hom(\Z^4,U(3)),q)= & \,1 +4q + 12q^2 + 36q^3 + 96q^4 + 212q^5 + 357q^6 + 472q^7  \\
				&  \,\,\,\,+555q^8+ 604q^9 + 574q^{10} + 468q^{11} + 330q^{12} + 204q^{13} \\
				&  \,\,\,\,+ 113q^{14} + 48q^{15} + 10q^{16}.
\end{align*}
\end{ex}

\begin{ex}[$G=G_2$] Now consider the exceptional Lie group $G_2$,
a 14 dimensional submanifold of $SO(7)$, which has rank 2 
and Weyl group the dihedral group $W=D_{12}$ of order 12 with presentation
$\langle s,t | s^2,t^6,(st)^2 \rangle.$ We can write
$W=\{1,t,t^2,t^3,t^4,t^5,s,st,st^2,st^3,st^4,st^5\}$ as a subgroup
of $GL(\mathfrak{t}^\ast)$ by setting
$$s=\left(
\begin{array}{cc}
1 & 0 \\
0 & -1 \\
\end{array}
\right),
\text{ and }
t=\frac{1}{2}\left(
\begin{array}{cc}
1 & \sqrt{3} \\
-\sqrt{3} & 1 \\
\end{array}
\right).$$
The characteristic degrees of $W$ are 2 and 6 as given in
Table \ref{table: characteristic degrees}.
The space $\Hom(\Z^n,G_2)$ is not path-connected since it has an
elementary abelian 2--subgroup of rank 3, which is non-toral.
Setting $t=1$ and $q=s$ the Poincar\'e series of $\Hom(\Z^n,G_2)_1$ is calculated
using Equation \ref{eqn: Poincare series of Hom(Zn,G)1}:
\begin{align*}
P(& \Hom(\Z^n,G_2)_1;q)= 1 + \frac{1}{12}
\big[(2{q}^{14}-2{q}^{12}-2{q}^{2}+2)(-{q}^{2}+1)^{n-1}\\
&+ (2{q}^{12}-2{q}^{10}-2{q}^{8}+4{q}^{6}-2{q}^{4}-2{q}^{2}+2)({q}^{2}-q+1)^{n}\\
&+ (2{q}^{12}+2{q}^{10}-2{q}^{8}-4{q}^{6}-2{q}^{4}+2{q}^{2}+2)( {q}^{2}+q+1)^{n}\\
&+ ({q}^{12}-2{q}^{10}+2{q}^{8}-2{q}^{6}+2{q}^{4}-2{q}^{2}+1)(-1+q)^{2n}
+(-4{q}^{12}+4)(-{q}^{2}+1)^{n} \\
&+ ({q}^{12}+2{q}^{10}+2{q}^{8}+2{q}^{6}+2{q}^{4}+2{q}^{2}+1)(q+1)^{2n}\big].
\end{align*}
For example, for $n=1,2,3$ we obtain:
\begin{align*}
P(\Hom(\Z^1,G_2)_1;q)=&1+q^3+q^{11}+q^{14}=P(G_2;q),\\
P(\Hom(\Z^2,G_2)_1;q)=&1+q^2+2q^3+q^4+2q^5+q^6+q^{10}+2q^{11}+2q^{13}+3q^{14},\\
P(\Hom(\Z^3,G_2)_1;q)=&1+3q^2+3q^3+6q^4+9q^5+3q^6+3q^7+3q^8+2q^9+3q^{10}\\
& \,\,\,+3q^{11}+3q^{12}+9q^{13}+6q^{14}+3q^{15}+3q^{16}+q^{18}.
\end{align*}
\end{ex}

It can be observed from the above formula for   
the Poincar\'e series that
the \textit{rational homological dimension} of the spaces of commuting $(2k-1)$-tuples in $G_2$ 
is the same as that for commuting $2k$-tuples, namely $12+4k$.   
However, it is not clear if there is a topological
reason for this phenomenon.

\begin{rmk} The above formulas suggests that for odd $n$,
$\Hom(\Z^n,G)_1$ is a \e{rational Poincar\'e duality space}; in particular,
the coefficients of the above Poincar\'e series are palindromic for $n$ odd. 
A geometric proof of this fact was provided to us by Antol\'in, Gritschacher, and Villarreal (private communication). 
Briefly, Baird's theorem (as discussed in Section~\ref{sec: topology Hom}) reduces us to showing that for $n$ odd, 
the action of $W$ on the manifold $G/T \cross T^n$ is \e{orientation preserving}. From the case $n=1$, where $H^*(G)$ is known, 
we can see that for each $w\in W$ the action of $w$ on $G/T$ is orientation-preserving if and only if the action of $w$ on $T$ is 
orientation preserving. Since $W$ acts diagonally on $T^n$, the result follows.
\end{rmk}

\end{document}